\documentclass[11pt]{article}
\usepackage{latexsym,amsmath,stackrel,color,amsthm,amssymb,epsfig,graphicx,mathrsfs}
\usepackage{xpatch}
\usepackage{graphicx}
\usepackage[title]{appendix}
\usepackage{amssymb}
\usepackage[left=1in,top=1in,right=1in,bottom=1in]{geometry}
\usepackage[linktocpage=true]{hyperref}
\usepackage{setspace}
\usepackage{amssymb, amsmath, amsthm, graphicx,mathrsfs}
\usepackage{caption}
\usepackage{subcaption}
\usepackage{enumitem}
\setlist[itemize]{noitemsep, topsep=0pt}
\usepackage{cite}
\usepackage{titlesec}
\usepackage{thmtools}
\usepackage{lipsum}

\setstretch{1.1}

\usepackage{comment,caption}
\usepackage{tikz}
\usetikzlibrary{calc,intersections}
\makeatletter
\def\thm@space@setup{%
  \thm@preskip=\parskip \thm@postskip=0pt
}
\makeatother

\makeatletter
\renewenvironment{proof}[1][\proofname]{\par
  \pushQED{\qed}%
  \normalfont \partopsep=\z@skip \topsep=\z@skip
  \trivlist
  \item[\hskip\labelsep
        \itshape
    #1\@addpunct{.}]\ignorespaces
}{%
  \popQED\endtrivlist\@endpefalse
}
\makeatother

\usepackage{hyperref}
\hypersetup{
    colorlinks=true,
    citecolor = blue,
    linkcolor= black,
    filecolor=magenta,      
    urlcolor=cyan,
    pdftitle={Overleaf Example},
    pdfpagemode=FullScreen,
    }

\urlstyle{same}

\usepackage[capitalize,nameinlink]{cleveref}
\usepackage{nameref}

\crefname{secinapp}{Appendix}{appendices}
\crefformat{subsection}{\S#2#1#3}
\crefalias{appendixthm}{thm}

\declaretheoremstyle[%
  spaceabove=2pt,%
  spacebelow=0pt,%
  headfont=\normalfont\itshape,%
  postheadspace=1em,%
  qed=\qedsymbol%
]{mystyle}
\newtheorem{thm}{Theorem}
\newtheorem{appendixthm}{Theorem}[section]

\newtheorem{appendixclaim}{Claim}[section]

\newtheorem{const}{Construction}
\newtheorem{problem}{Problem}

\newtheorem{lemma}{Lemma}

\newtheorem{question}{Question}

\newcommand{\Mod}[1]{(\mathrm{mod}\ #1)}

\parindent=0pt
\parskip=4pt

\tikzstyle{vertex}=[circle,fill=black,inner sep=2pt]
\tikzstyle{vertrect}=[draw,rectangle,inner sep=2pt]
\tikzstyle{vertdia}=[draw,diamond,inner sep=2pt]
\newcommand{\vb}[1]{\boldsymbol{#1}}

\newcommand{\N}{\mathbb{N}}
\newcommand{\Z}{\mathbb{Z}}

\newcommand{\A}{\mathcal{A}}
\newcommand{\F}{\mathcal{F}}
\newcommand{\B}{\mathcal{B}}

\newcommand{\Span}{\mathrm{Span}}

\newcommand{\ff}[1]{\mathbb{F}_2^#1}

\titleformat{\subsection}[runin]
        {\normalfont\bfseries}
        {\thesubsection}
        {0.4em}
        {}
        [.]

\begin{document}
\title{A few new oddtown and eventown problems} 
\author{
	Griffin Johnston \\
	Department of Mathematics, \\
	Emory University \\
	\texttt{jgjohn5@emory.edu} 
        \and 
        Jason O'Neill \\
	Department of Mathematics, \\
	California State University, Los Angeles \\
	\texttt{joneill2@calstatela.edu} 
}

\maketitle
\begin{abstract}
Given a vector $\alpha = (\alpha_1, \ldots, \alpha_k) \in \mathbb{F}_2^{k}$, we say a collection of subsets $\mathcal{F}$ satisfies $\alpha$-intersection pattern modulo $2$ if all $i$-wise intersections consisting of $i$ distinct sets from $\mathcal{F}$ have size $\alpha_i \pmod{2}$. In this language, the classical oddtown and eventown problems correspond to vectors $\alpha=(1,0)$ and $\alpha=(0,0)$ respectively. In this paper, we determine the largest such set families of subsets on a $n$-element set  with $\alpha$-intersection pattern modulo $2$ for all $\alpha \in \mathbb{F}_2^{3}$ and all $\alpha \in \mathbb{F}_2^{4}$ asymptotically. Lastly, we consider the corresponding problem with restrictions modulo $3$.  
\end{abstract}

\section{Introduction}
Given a collection $\F$ of subsets of an $n$ element set, $\F$ follows \textit{oddtown rules} if the sizes of all sets in $\F$ are odd and distinct pairs of sets from $\F$ have even sized intersections. Similarly, a collection $\F$ of subsets of an $n$ element set follows \textit{eventown rules} if the sizes of all sets in $\F$ are even and distinct pairs of sets from $\F$ have even sized intersections. Berlekamp \cite{BERK} and Graver\cite{GRA} proved that the size of a family which satisfies oddtown rules is at most $n$ and a family which satisfies eventown is at most $2^{\lfloor n/2 \rfloor}$, and these results are best possible. 

There has been a substantial \cite{GishbolinerTomonSudakov,ONEILL,VVU0,VVU1,DFS} amount of research devoted to generalizations and applications of oddtown and eventown. See the book by Frankl and Tokushige \cite{FT} for an excellent survey of intersection problems for finite sets. Very recently, the breakthrough of Conlon and Ferber \cite{CF}, and subsequent work of Wigderson \cite{WIG}, utilized oddtown and eventown results and randomness to improve lower bounds on mutlicolor Ramsey numbers. (Although Sawin \cite{SAWIN} further improved these bounds without oddtown and eventown.) Further, there has been active research \cite{OV,VVU2,FS,GS} in $k$-wise generalizations of the oddtown and eventown problems, which involve constraints on the $k$-wise intersections from a family. In this paper, we consider another such $k$-wise generalization. 
 
Let $[n]=\{1,2,\ldots,n\}$, $2^{[n]}$ denote the collection of all subsets of $[n]$, and $\binom{[n]}{r}$ denote all size $r$ subsets of $[n]$ for an integer $1 \leq r \leq n$. Let $\alpha = (\alpha_1, \ldots, \alpha_k) \in \ff{k}$. Then, a collection $\F \subset 2^{[n]}$ satisfies {\it $\alpha$-intersection pattern modulo $2$} if $|F_1 \cap \cdots \cap F_i| = \alpha_i \pmod{2}$ for all distinct $F_1, \ldots, F_i \in \F$. Let $f_\alpha(n)$ denote the maximum size family $\F \subset 2^{[n]}$ where $\F$ satisfies $\alpha$-intersection pattern modulo $2$. In this paper, we primarily focus on $f_\alpha(n)$ for $\alpha \in \ff{3}$ and $\alpha \in \ff{4}$ and build upon previous results on $f_\alpha(n)$ in the literature.

In our language, the classical oddtown and eventown problems of Berlekamp \cite{BERK} and Graver\cite{GRA} show that $f_{(1,0)}(n)=n$ and $f_{(0,0)}(n) = 2^{\lfloor n/2 \rfloor}$ respectively. The dual oddtown problem (referred to as the ``Reverse" oddtown problem in \cite[Exercise 1.1.5]{BF}) shows that $f_{(0,1)}(n) = n$ when $n$ is odd and $f_{(0,1)}(n)=n-1$ when $n$ is even. We are unaware if the dual eventown problem (i.e. the case when $\alpha=(1,1)$) appears in the literature and as such include a proof in \cref{sec:appendix}. We show that $f_{(1,1)}(n) = 2^{ \lfloor (n-1)/2 \rfloor }$ and therefore these results collectively handle $f_\alpha(n)$ for all $\alpha \in \ff{2}$.

For vectors $ \alpha \in \ff{3}$, six of the eight vectors follow with a short argument that we include in \cref{sec:3wise}. Our first theorem handles the two remaining vectors in $\ff{3}$:

\begin{thm}\label{thm:alpha001}
Let $n\geq 7$. Then 
\begin{equation*}
 f_{(1,1,0)}(n) = 
    \begin{cases}
        \lfloor n/2 \rfloor + 1 & \text{$n \equiv 2,3 \pmod{4}$} \\
        \lfloor n/2 \rfloor & \text{$n \equiv 0,1 \pmod{4}$}
    \end{cases}    
        \quad 
        \text{and}
        \quad 
 f_{(0,0,1)}(n) = 
    \begin{cases}
        \lfloor n/2 \rfloor + 1 & \text{
        $n \equiv 3 \pmod{4}$} \\
        \lfloor n/2 \rfloor & 
        \text{otherwise}
    \end{cases} \; .        
\end{equation*}
\end{thm}
Theorem \ref{thm:alpha001}, together with a few short arguments in Section 3, will establish the following table: 

{\hypersetup{linkcolor = blue}
\begin{table}[h!]
\centering
{\renewcommand{\arraystretch}{1.5}
\begin{tabular}{|c|c|c||c|c|c|}
\hline
$\alpha$ & section &  $f_{\alpha} (n)$ & $\alpha$ & section & $f_{\alpha} (n)$ \\
\hline
$(1,0,0)$ & \cref{subsec:100} &  $n$ & 
$(0,1,1)$ & \cref{subsec:011} &  $n-1$ \\ 
\hline
$(0,0,0)$ & \cref{subsec:000}  & $2^{\lfloor n/2 \rfloor}$ 
& $(1,1,1)$ & \cref{subsec:111} &  $2^{\lfloor (n-1)/2 \rfloor}$ \\ 
\hline 
$(1,0,1)$ &  \cref{subsec:101} &  
$  \begin{cases} n  & \text{$n$ even} \\ 
n-1  & \text{$n$ odd}
\end{cases} $ 
 & $(0,1,0)$ & \cref{subsec:010} & $ \begin{cases}
            n-1 & \text{$n$ even} \\
            n  & \text{$n$ odd}
        \end{cases} $ \\ 
\hline
$(1,1,0)$ &  \cref{subsec:110}  &$  \begin{cases}
        \lfloor n/2 \rfloor + 1 & \text{$n \equiv 2,3 $} \\
        \lfloor n/2 \rfloor & \text{$n \equiv 0,1$}
    \end{cases}   $ 
& $(0,0,1)$ &  \cref{subsec:001} &  $  \begin{cases}
        \lfloor n/2 \rfloor + 1 & \text{
        $n \equiv 3 $} \\
        \lfloor n/2 \rfloor & 
        \text{otherwise}
    \end{cases}$ \\ 
\hline
\end{tabular}
}
\caption{Our $3$-wise results for $n\geq 7$. For ease of display, the cases in the last row are taken modulo $4$.}
\end{table}
}

For intersection patterns $\alpha \in \ff{k}$ with $k \geq 4$, we will need to utilize asymptotic notation to state our results. For functions $f,g :\mathbb N\rightarrow \mathbb R^+$, we write $f = o(g)$ if $\lim_{n \rightarrow \infty} f(n)/g(n) = 0$, and $f = O(g)$ if there is $c > 0$ such that $f(n)\leq cg(n)$ for all $n \in \mathbb N$. If $f = O(g)$ and $g = O(f)$, we write $f =\Theta(g)$. If $g = O(f)$, we write $f=\Omega(g)$. We write $f \sim g$ if $\lim_{n \rightarrow \infty} f(n)/g(n) = 1$. Sudakov and Vieira \cite{SV} studied the case where $\alpha=(0,0, \ldots, 0)$, which they referred to as {\it Strong $k$-wise eventown} towards proving a nice stability result on $k$-wise eventown when $k \geq 3$ in contrast to the classical eventown problem. The second author and Verstra\"{e}te \cite{OV} showed that $f_\alpha(n) = \Theta (n^{1/t})$ when $\alpha=(1,1,\ldots,1,0,0, \ldots, 0)$ with $t$ leading zeroes and $k-t$ trailing ones so long as $2t-2 \leq k$. It is also worth noting that other $k$-wise works \cite{GS,FS,VVU2} consider problems with restrictions on the sets and the $k$-wise intersections from the family without conditions on the intermediate intersections.

For vectors in $\ff{4}$, we state asymptotic results for eight vectors in total. However, in \cref{sec:prelim}, we establish that $f_{\alpha}(n-1) \leq f_{\alpha + \mathbf{1}}(n) \leq f_{\alpha}(n+1)$, which yields a natural duality between these problems for vectors $\alpha$ and $\alpha+\mathbf{1}$. In fact, this duality is apparent in Table 1 by examining the four different rows in the table.  This, together with our work, establishes asymptotic results for all sixteen vectors in $\ff{4}$. Our main result for $4$-wise intersection patterns is as follows:

\begin{thm}\label{thm:aplha0110}
As $n \to \infty$,
\[ f_{(0,1,1,0)}(n) \sim \sqrt{2n} \hspace{5mm} \text{and} \hspace{5mm} f_{(0,0,0,1)}(n) \sim \sqrt{2n}.  \]
\end{thm}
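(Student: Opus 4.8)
By the duality $f_\alpha(n) \sim f_{\alpha+\mathbf 1}(n)$ for $\alpha \notin \{\mathbf 0, \mathbf 1\}$ established in \cref{sec:prelim}, it suffices to prove $f_{(0,1,1,0)}(n) \sim \sqrt{2n}$, since $(0,1,1,0)+\mathbf 1 = (1,0,0,1)$ and... wait — we want $(0,0,0,1)$, whose dual is $(1,1,1,0)$, not $(0,1,1,0)$. So the two halves are genuinely separate computations, though I expect them to run in parallel. The plan is to handle each vector by (i) a linear-algebra upper bound over $\mathbb F_2$ and (ii) a matching construction, and to organize both around the characteristic vectors $v_F \in \mathbb F_2^n$ together with the bilinear form $\langle v_F, v_G\rangle = |F\cap G| \bmod 2$.

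\emph{Upper bounds.} Fix $\F$ satisfying the pattern. The conditions $\alpha_1,\alpha_2$ pin down $|F| \bmod 2$ and $|F\cap G|\bmod 2$ for all distinct $F,G$, so the Gram matrix of $\{v_F\}$ over $\mathbb F_2$ is completely determined: for $(0,1,1,0)$ every $|F|$ is even and every pairwise intersection is odd, so the Gram matrix is $J - I$; for $(0,0,0,1)$ every $|F|$ and every pairwise intersection is even, so the Gram matrix is $0$, meaning $\Span\{v_F\}$ is totally isotropic and hence has dimension $\le n/2$ — but that only gives $|\F| \le 2^{n/2}$, which is far too weak. The key extra leverage must come from the \emph{triple} constraint $\alpha_3$: knowing $|F\cap G\cap H|\bmod 2$ for all triples is a genuinely multilinear (not bilinear) condition, and the right way to exploit it is the standard trick of passing to a cubic/trilinear form or, more efficiently, fixing one set $F_0 \in \F$ and working inside its indicator-restricted space. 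Concretely, replacing each $F$ by $F \triangle F_0$ (or intersecting with $F_0$) should convert the triple condition into a pairwise condition on a smaller ground set, reducing $(0,1,1,0)$ and $(0,0,0,1)$ to an oddtown/eventown-type problem on roughly $|F_0|$ or $n - |F_0|$ points; optimizing the size of $F_0$ is what produces the $\sqrt{2n}$ scaling rather than $n$ or $2^{n/2}$. I would run this reduction carefully, tracking parities, to get $|\F| \le (1+o(1))\sqrt{2n}$.

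\emph{Lower bounds.} For the construction I would guess a "sunflower"-type family: fix a common core $C$ of appropriate size and take $F = C \cup \{x\}$ for singletons $x$ outside $C$, possibly together with $C$ itself, adjusting the parity of $|C|$ so that $|F| \equiv \alpha_1$, $|F\cap G| = |C| \equiv \alpha_2$, and $|F\cap G\cap H| = |C| \equiv \alpha_3$ all hold. That gives only $\Theta(n)$ sets with all intersections equal, which overshoots — so instead the extremal construction should be a union of blocks: partition (most of) $[n]$ into $m$ blocks, take $C$ plus one set per block, where within a block the sets are chosen to satisfy a local oddtown/eventown condition; balancing "number of blocks" against "size per block" under the constraint that the total uses $n$ points yields $m \cdot (\text{block size}) \approx n$ with $|\F| \approx$ (something like) $\binom{m}{\le 1}$ per... more likely the right picture is $2\binom{m}{2}\approx n$ giving $|\F|\approx m \approx \sqrt{2n}$. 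I would reverse-engineer the exact block structure from the upper-bound proof so the two bounds meet.

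\emph{Main obstacle.} The crux is the upper bound, specifically extracting the factor-$\sqrt 2$-sharp consequence of the triple-intersection constraint. Bilinear algebra alone stalls at $2^{n/2}$ for $(0,0,0,1)$ and at $n$ for $(0,1,1,0)$; the $\sqrt{2n}$ bound forces one to use the cubic constraint in a quantitatively tight way — e.g. by showing that after fixing one (or two) reference sets the remaining family lives in oddtown on a ground set of size at most $\approx n/(\text{something})$ \emph{and} the reference sets themselves cost points, then optimizing. Getting the constants to match the construction exactly (not just up to a constant factor) is where the real work lies; the asymptotic $\sim$ rather than $\Theta$ in the statement signals that the authors do pin down the leading constant, so I expect a somewhat delicate optimization at the end rather than a clean closed-form identity.
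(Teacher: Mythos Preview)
Your upper-bound plan has a real gap. Fixing one reference set $F_0$ and tracing (via $F\mapsto F\cap F_0$ or $F\mapsto F\cap F_0^c$ or $F\mapsto F\triangle F_0$) only ever reduces a $k$-wise pattern to a $(k-1)$-wise pattern on a ground set of size $|F_0|$ or $n-|F_0|$; for $(0,1,1,0)$ the trace on $F_0$ satisfies the $(1,1,0)$-pattern, giving $|\F|-1\le |F_0|/2+O(1)$, and the trace on $F_0^c$ satisfies $(1,0,1)$, giving $|\F|-1\le n-|F_0|$. Adding these yields $|\F|=O(n)$, not $O(\sqrt n)$, and no choice of $F_0$ (or of two reference sets) improves this, because you are throwing away all but one or two of the sets before applying any linear-algebra. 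The same happens for $(0,0,0,1)$. So ``optimizing over the reference set'' cannot produce $\sqrt{2n}$; the quadratic saving has to come from using \emph{all} pairs simultaneously.

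The paper's argument is different in exactly this way. For the auxiliary patterns $(0,1,0,0)$ and $(0,0,1,1)$ one observes that the whole family of pairwise intersections $\{A\cap B: A\neq B\in\F\}$ satisfies oddtown (respectively dual oddtown) rules, and that the third/fourth coordinates force these $\binom{|\F|}{2}$ sets to be pairwise distinct; hence $\binom{|\F|}{2}\le n$, i.e.\ $|\F|\le \sqrt{2n}+1$. The patterns $(0,1,1,0)$ and $(0,0,0,1)$ do \emph{not} yield a clean oddtown family of pairwise intersections (the $3$-wise and $4$-wise parities disagree), so the paper transfers the bound via the \nameref{lemma:PartitionSumLemma}: writing $(0,0,1,1)=(0,1,1,0)+(0,1,0,1)$ and $(0,1,0,0)=(0,0,0,1)+(0,1,0,1)$, and using that $f_{(0,1,0,1)}(m)\ge m-1$, one deduces $f_{(0,1,1,0)}$ and $f_{(0,0,0,1)}$ are at most $f_{(0,0,1,1)}$ and $f_{(0,1,0,0)}$ up to a $o(n)$ shift in the ground set. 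For the lower bound, your numerology $\binom{m}{2}\approx n$ is right but the family is concrete: on a ground set containing $\binom{[m]}{2}$ (and, for $(0,0,0,1)$, also $[m]$), take $A_i$ to be the edge-star at $i$ in $K_m$ (together with $[m]\setminus\{i\}$ for $(0,0,0,1)$), with $m$ of the correct parity.
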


\cref{thm:aplha0110}, together with a few short arguments in \cref{sec:4wise} will establish the following table: 
{\hypersetup{linkcolor = blue}
\begin{table}[h!]
\centering
{\renewcommand{\arraystretch}{1.5}
\begin{tabular}{|c|c|c||c|c|c|}
\hline
$\alpha$ & section & $f_{\alpha} (n)$ & $\alpha$ & section & $f_{\alpha} (n)$ \\
\hline
\hline
$(0,0,0,0)$ & \cref{subsec:0000} & $2^{\lfloor n/2 \rfloor}$ 
& $(1,0,0,0)$ & \cref{subsec:1000} & $n$ \\ 
\hline 
$(1,0,1,0)$ & \cref{subsec:1010} & $ \begin{cases}
            n  & \text{$n$ even} \\
            n-1  & \text{$n$ odd}
        \end{cases} $ 
& $(0,0,1,0)$ & \cref{subsec:0010}  &  $  \begin{cases}
        \lfloor n/2 \rfloor + 1 & \text{
        $n \equiv 3 \pmod{4}$} \\
        \lfloor n/2 \rfloor & 
        \text{otherwise}
    \end{cases}$ \\ 
\hline
$(0,1,0,0)$ & \cref{subsec:0100}  & $\sim  \sqrt{2n}$ & 
$(0,0,1,1)$ & \cref{subsec:0011} & $ \sim \sqrt{2n}$ 
 \\ 
\hline
$(0,1,1,0)$ & \cref{subsec:0110} & $\sim  \sqrt{2n}$ 
& $(0,0,0,1)$ & \cref{subsec:0001} & $ \sim \sqrt{2n}$ \\ 
\hline
\end{tabular}
}
\caption{Our $4$-wise results for $n\geq 7$}
\end{table}
}

For both $\ff{3}$ and $\ff{4}$, there are only eight such vectors $\alpha$ for which there exists a linear size construction $\F \subset 2^{[n]}$ that satisfies $\alpha$-intersection pattern modulo $2$ (i.e. $f_\alpha(n) = \Omega(n)$). These vectors $\alpha$ correspond to $\alpha=\vb{0}$ (Strong Eventown), $\alpha \in \{(1,0,0),(1,0,0,0)\}$ (Strong Oddtown), $\alpha=\{(1,0,1), (1,0,1,0)\} $ (Alternating Oddtown) and $\alpha = \{ (0,0,1), (0,0,1,0)\}$ (Delayed Alternating Oddtown) and their corresponding duals. We are able to show that for $k \geq 5$, there are also only eight such vectors with $f_\alpha(n)=\Omega(n)$ in the following strong form:

\begin{thm}\label{thm:genk}
Fix $k \geq 3$. Let $\alpha \in \ff{k}$ be so that neither $\alpha$, nor  $\alpha+\vb{1}$ are in the set \\ $\{ \vb{0}, (1,0,\ldots,0), (1,0,1,0,\ldots), (0,0,1,0,1,0,\ldots) \}$. Then $f_\alpha(n) = O(\sqrt{n})$.     
\end{thm}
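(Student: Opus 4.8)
The plan is to convert the (slightly opaque) hypothesis on $\alpha$ into one clean structural statement and then feed it into a single $\mathbb{F}_2$–rank estimate built from the ``pairwise intersections'' of the family.

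\textbf{Step 1: reformulating the hypothesis.} I would first observe that the eight vectors of $\ff k$ lying in $S\cup(S+\mathbf 1)$, where $S=\{\mathbf 0,(1,0,\dots,0),(1,0,1,0,\dots),(0,0,1,0,1,\dots)\}$, are \emph{exactly} the vectors of the form $(\alpha_1,a,b,a,b,\dots)$, i.e.\ those with $\alpha_j=\alpha_{j+2}$ for all $2\le j\le k-2$. (Both lists have eight members once $k\ge 3$, and one matches them up by inspecting the first three coordinates.) Hence the hypothesis ``$\alpha,\alpha+\mathbf 1\notin S$'' is equivalent to: \emph{there is an index $s$ with $2\le s\le k-2$ and $\alpha_s\ne\alpha_{s+2}$}. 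When $k=3$ no such $s$ exists and, consistently, $S\cup(S+\mathbf 1)=\ff 3$, so the theorem is vacuous; thus assume $k\ge 4$, fix such an $s$, and set $t:=s-2$, so $0\le t\le k-4$.

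\textbf{Step 2: a family of pairwise intersections.} Let $\F=\{F_1,\dots,F_m\}$ have $\alpha$-intersection pattern mod $2$ on $[n]$, with characteristic vectors $v_1,\dots,v_m\in\mathbb F_2^n$; we may assume $m>k$ (else $m=O(1)$). Fix distinct indices $c_1,\dots,c_t$ and, for each $2$-subset $\{i,j\}$ of the remaining $m-t$ indices, put $w_{ij}:=v_i\odot v_j\odot v_{c_1}\odot\cdots\odot v_{c_t}$ (coordinatewise product on $\mathbb F_2^n$), the characteristic vector of $F_i\cap F_j\cap F_{c_1}\cap\cdots\cap F_{c_t}$. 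For $\{i,j\}\ne\{i',j'\}$ the vector $w_{ij}\odot w_{i'j'}$ is the characteristic vector of an intersection of $t+3$ distinct sets if $|\{i,j\}\cap\{i',j'\}|=1$ and of $t+4$ distinct sets if these pairs are disjoint, while $w_{ij}\odot w_{ij}=w_{ij}$ corresponds to $t+2$ distinct sets. Since $t+2,t+3,t+4\le k$, the $\mathbb F_2$ inner products $\langle w_{ij},w_{i'j'}\rangle$ equal $\alpha_{t+2},\alpha_{t+3},\alpha_{t+4}$ respectively, depending only on the ``type'' of the pair of pairs. So the $\mathbb F_2$ Gram matrix of $\{w_{ij}\}$ is $G=\alpha_{t+2}I+\alpha_{t+3}A_1+\alpha_{t+4}A_2$, where $A_1,A_2$ are the adjacency matrices of the ``meet in exactly one point'' and ``disjoint'' graphs on the $\binom{m-t}{2}$ pairs, and $I+A_1+A_2=J$.

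\textbf{Step 3: the rank estimate.} Using $A_2=J+I+A_1$ rewrite $G=(\alpha_{t+2}+\alpha_{t+4})I+(\alpha_{t+3}+\alpha_{t+4})A_1+\alpha_{t+4}J$. Now $A_1\equiv BB^{\top}\pmod 2$, where $B$ is the $\binom{m-t}{2}\times(m-t)$ pair--vertex incidence matrix (since $(BB^{\top})_{e,f}=|e\cap f|$), so $\operatorname{rank}_{\mathbb F_2}A_1\le m-t$; also $\operatorname{rank}J=1$. Because $\alpha_{t+2}=\alpha_s\ne\alpha_{s+2}=\alpha_{t+4}$, the coefficient of $I$ in $G$ is $1$, whence $\operatorname{rank}G\ge\binom{m-t}{2}-(m-t)-1$. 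On the other hand $G=WW^{\top}$ where $W$ has the vectors $w_{ij}\in\mathbb F_2^n$ as rows, so $\operatorname{rank}G\le n$. Combining, $\binom{m-t}{2}\le n+(m-t)+1$, which forces $m-t=O(\sqrt n)$ and hence, as $t<k$ is constant, $m=O(\sqrt n)$.

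\textbf{Where the work is.} The real content is Step 1: recognizing that the exceptional set of vectors is precisely the set of $\alpha$ that are eventually $2$-periodic starting from coordinate $2$, which is exactly the obstruction to a uniform choice of the shift $t$ in Step 2. After that, Steps 2--3 are short and robust, the only care being elementary bookkeeping to ensure $m$ is large enough that the pair family is nondegenerate and that every intersection invoked involves at most $k$ sets. (One could alternatively invoke the duality $f_\alpha\sim f_{\alpha+\mathbf 1}$ of \cref{sec:prelim} to assume $\alpha_1=0$ and halve the finite check in Step 1, but this is not needed.)
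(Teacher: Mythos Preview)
Your proof is correct and takes a genuinely different route from the paper's.

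The paper argues by reduction to the $4$-wise case: it first observes that one may assume $(\alpha_1,\alpha_2,\alpha_3,\alpha_4)$ is one of the four ``linear'' prefixes $(0,0,0,0)$, $(1,0,0,0)$, $(1,0,1,0)$, $(0,0,1,0)$ (else the $4$-wise table already gives $O(\sqrt n)$), and then, for each prefix, uses the fact that $\alpha$ is not one of the eight exceptional vectors to locate a length-$4$ window $(\alpha_{j-3},\alpha_{j-2},\alpha_{j-1},\alpha_j)$ equal to one of the $O(\sqrt n)$ patterns of Table~2 (or a dual), whereupon the Trace Lemma finishes. In particular, the paper's argument rests on the upper bounds of \cref{subsec:0100}--\cref{subsec:0110}, which themselves use the Partition Sum Lemma for $(0,0,0,1)$ and $(0,1,1,0)$.

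Your argument bypasses all of that scaffolding. The reformulation in Step~1---that the exceptional set is exactly $\{\alpha: \alpha_j=\alpha_{j+2}\text{ for all }2\le j\le k-2\}$---is the right way to see the hypothesis, and it directly produces the shift $t$ needed to make the diagonal of the Gram matrix nonzero. After that, a single $\ff{}$ rank bound (via $A_1\equiv BB^\top$ and the subadditivity of rank) handles every case uniformly, with no appeal to the Dual Lemma, the Partition Sum Lemma, or the individual $4$-wise computations. The cost is that one has to carry out the Johnson-scheme bookkeeping in Step~2; the gain is a self-contained proof that would stand even if Section~4 were deleted, and which makes transparent \emph{why} the condition $\alpha_s\ne\alpha_{s+2}$ is the relevant one.
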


As with the classical oddtown and eventown problems, it is natural to ask about modulo $p$ variants. In this paper, we also consider intersection patterns modulo $3$ for $\alpha \in \ff{3}$. To generalize our results to the modulo $3$ setting, we adopt the convention that we restrict our intersections to either be zero modulo $3$ or nonzero modulo $3$. We will write $\alpha_i = \star$ to indicate that we are requiring all distinct $i$-wise intersections to be nonzero modulo $3$ (i.e. the sizes are in $\{1,2\} \pmod{3}$). Abusing notation, for a vector $\alpha = (\alpha_1, \ldots, \alpha_k) \in \{ 0, \star \}^k$, we say a collection of subsets $\F$ satisfies {\it $\alpha$-intersection pattern modulo $3$} if all $i$-wise intersections consisting of $i$ distinct sets from $\F$ have size $\alpha_i \pmod{3}$. We let $g_\alpha(n)$  denote the maximum size family $\F \subset 2^{[n]}$ where $\F$ satisfies $\alpha$-intersection pattern modulo $3$.

In this language, the classical modulo $3$ oddtown problem yields that $g_{(\star,0)}(n) = n$. In stark contrast to the modulo $2$ setting, there is no duality. In fact, $g_{(0,\star)}(n) = \Theta(n^2)$, with a lower bound coming from the star $\{ A \in \binom{[n]}{3} : 1 \in A\}$ and the upper bound from the Deza-Frankl-Singhi Theorem \cite{DFS} (see also \cite[Theorem 5.15]{BF}) since satisfying a $(0,\star)$-intersection pattern is equivalent to being a $(3,\{1,2\})$-intersecting family. In \cref{sec:mod3problems}, we will show the 
results in \cref{table:3wise_mod3}.
{\hypersetup{linkcolor = blue}
\begin{table}[h!]
\centering
{\renewcommand{\arraystretch}{1.25}
\begin{tabular}{|c|c|c||c|c|c|}
\hline
$\alpha$ & section/reference & $g_{\alpha} (n)$ & 
$\alpha$ & section/reference & $g_{\alpha} (n)$ \\
\hline
\hline
$(\star,0,0)$ & \cref{subsec:star00} &  $n$ & 
$(0,\star,\star)$ & \cref{subsec:0starstar}  &  $\sim n^2/2$ \\
\hline
$(\star,0,\star)$ & \cref{subsec:star0star} & $\sim n$ & 
$(0,\star,0)$ & \cref{subsec:0star0} &  $\Theta(n) $ \\
\hline

$(\star,\star,0)$ & \cref{subsec:starstar0} & $\sim n$ & 
$(0,0,\star)$ & \cref{subsec:00star}  & $\Omega(n)$; $O(n^2)$ \\ 

\hline
$(0,0,0)$ &  \cite{FO,SV} &  $\Omega(c^n)$ &
$(\star,\star,\star)$ & \cite{FO,SV}   & $\Omega(c^n)$\\ 
\hline

\end{tabular}
}
\caption{The $\Mod{3}$ results}
\label{table:3wise_mod3}
\end{table}
}

In the case where $\alpha=(0,0)$, a natural construction is to partition the ground set into sets of size three and take all possible unions, yielding $2^{\lfloor n/3 \rfloor}$ subsets. Frankl and Odlyzko \cite{FO} give a construction utilizing Hadamard matrices which in our particular setting shows that $g_{(0,0)}(n) = \Omega(c^n)$ for $c=24^{1/12} \approx 1.3$ whereas $2^{1/3} \approx 1.26$. By adding an auxiliary element to each set, one also gets a construction of a family that satisfies $(\star,\star)$-intersection pattern modulo $3$ and shows $g_{(\star,\star)}(n) = \Omega(c^n)$ for $c=24^{1/12} \approx 1.3$. For $(0,0,0) \in \{0,\star\}^{3}$, Sudakov and Vieira \cite[Lemma 16]{SV} prove a connection between families satisfying $ \vb{0} \in \{0,\star\}^{k-1}$ intersection pattern modulo $p$ and families satisfying $ \vb{0} \in \{0,\star\}^{k}$ intersection pattern modulo $p$. Hence, the Frankl and Odlyzko \cite{FO} construction and Sudakov and Vieira \cite{SV} lemma collectively fill in the fourth row of the above table. We make no attempt to improve the constant of $c$ in this paper. However, it is also worth noting that very recent work of Gishboliner, Tomon and Sudakov \cite{GishbolinerTomonSudakov} implies that for all $\ell \geq 1$, there exists a $k=k(\ell)$ so that the canonical construction of $2^{\lfloor n/ \ell \rfloor}$ subsets is essentially (i.e. off by a constant) extremal amongst families which satisfy $\vb{0} \in \{0,\star\}^{k}$ intersection pattern modulo $\ell$. 

The other open case is $\alpha= (0,0,\star)$ for which we will show $g_{(0,0,\star)}(n)= \Omega(n)$ and $g_{(0,0,\star)}(n)= O(n^2)$. We leave determining the value of $g_{(0,0,\star)}(n)$ as an interesting open problem: 

\begin{problem}
Determine the value of $g_{(0,0,\star)}(n)$.      
\end{problem}
  
{\bf Structure:} 
In \cref{sec:prelim}, we will prove a few lemmas towards determining $f_n(\alpha)$ when $\alpha \in \ff{k}$ for $k=3,4$ and also general values of $k$. We will prove \cref{thm:alpha001} and the remaining values from Table 1 in \cref{sec:3wise}. In \cref{sec:4wise}, we will prove \cref{thm:aplha0110} and the remaining values from Table 2 as well as prove \cref{thm:genk}. We will then discuss Table 3 in \cref{sec:mod3problems}.

\section{Preliminaries}\label{sec:prelim}
In this section, we will prove three lemmas that we will utilize throughout. The following lemma establishes a connection between $f_\alpha(n)$ and 
$f_\beta(n)$ when $\alpha$ contains $\beta$ as a consecutive 
substring.  
\begin{lemma}[Trace Lemma]\label{lemma:trace}
    Let $\alpha = (\alpha_1,\dots, \alpha_k) \in \mathbb{F}_2^k$ and 
    let $\beta = (\alpha_{t+1},\dots, \alpha_{t+r}) \in \mathbb{F}_2^r$ where 
    $t+r \leq k$, $f_\alpha(n) > k$, and $\alpha_{t+1} \neq \alpha_{t+2}$. 
    Then, if $\F$ satisfies $\alpha$-intersection pattern modulo $2$ and $F_1, \ldots, F_t \in \F$,   
    \begin{equation*}
        |\F| \leq f_\beta(|F_1 \cap \cdots \cap F_t|) + t    \end{equation*}
\end{lemma}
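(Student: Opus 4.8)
The plan is to start from an extremal family for $\alpha$ and pass to its \emph{trace} on a fixed $t$-fold intersection, producing a family that witnesses the $\beta$-pattern. Concretely, let $\F \subseteq 2^{[n]}$ with $|\F| = f_\alpha(n)$ satisfy the $\alpha$-intersection pattern modulo $2$. I may assume $t \geq 1$, since for $t = 0$ the vector $\beta$ is a prefix of $\alpha$ and every $\alpha$-family is trivially a $\beta$-family, giving $f_\alpha(n) \leq f_\beta(n)$. Since $f_\alpha(n) > k \geq t$, I fix $t$ distinct sets $F_1, \dots, F_t \in \F$, put $X := F_1 \cap \cdots \cap F_t$, and define
\[
\mathcal{G} := \bigl\{\, F \cap X : F \in \F \setminus \{F_1, \dots, F_t\} \,\bigr\} \subseteq 2^{X} \subseteq 2^{[n]} .
\]

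First I would verify that $\mathcal{G}$ satisfies the $\beta$-intersection pattern modulo $2$. If $G_1, \dots, G_i \in \mathcal{G}$ are distinct, write $G_j = \widehat{F}_j \cap X$ with $\widehat F_1, \dots, \widehat F_i \in \F \setminus \{F_1, \dots, F_t\}$ pairwise distinct; then $\widehat F_1, \dots, \widehat F_i, F_1, \dots, F_t$ are $t+i$ distinct members of $\F$, so
\[
|G_1 \cap \cdots \cap G_i| = \bigl|\widehat F_1 \cap \cdots \cap \widehat F_i \cap F_1 \cap \cdots \cap F_t\bigr| \equiv \alpha_{t+i} = \beta_i \pmod 2 ,
\]
where $\alpha_{t+i}$ is defined because $t+i \leq t+r \leq k$; the case $i = 1$ handles the set-size constraint $|G| \equiv \beta_1$.

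The crux, and the only place the hypothesis $\alpha_{t+1} \neq \alpha_{t+2}$ is used, is showing the trace map $F \mapsto F \cap X$ is injective on $\F \setminus \{F_1, \dots, F_t\}$, so that $|\mathcal{G}| = |\F| - t$. Suppose $F \cap X = F' \cap X =: Y$ for distinct $F, F' \in \F \setminus \{F_1, \dots, F_t\}$. Computing $|Y|$ two ways: as a $(t+1)$-wise intersection, $|Y| = |F \cap F_1 \cap \cdots \cap F_t| \equiv \alpha_{t+1}$; and since $Y = Y \cap Y = (F \cap X) \cap (F' \cap X)$, also $|Y| = |F \cap F' \cap F_1 \cap \cdots \cap F_t| \equiv \alpha_{t+2}$, a $(t+2)$-wise intersection. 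Thus $\alpha_{t+1} \equiv \alpha_{t+2} \pmod 2$, contradicting $\alpha_{t+1} \neq \alpha_{t+2}$.

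Putting the pieces together, $\mathcal{G} \subseteq 2^{[n]}$ realizes the $\beta$-pattern, so $f_\beta(n) \geq |\mathcal{G}| = f_\alpha(n) - t$, i.e. $f_\alpha(n) \leq f_\beta(n) + t$. I expect the injectivity of the trace map to be the main (indeed, the only) nontrivial point; everything else is bookkeeping about the arity of the intersections involved, and the hypothesis $f_\alpha(n) > k$ only serves to guarantee there are enough sets to carry out the construction.
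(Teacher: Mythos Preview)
Your proof is correct and follows essentially the same route as the paper's: trace an extremal $\alpha$-family onto a fixed $t$-fold intersection to produce a $\beta$-family, and use $\alpha_{t+1}\neq\alpha_{t+2}$ to force injectivity of the trace map. Your write-up is in fact somewhat more careful than the paper's, which asserts the $\beta$-pattern without spelling out the arity check and does not single out the $t=0$ case.
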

\begin{proof}
    Let $\F$ and
    $F_1,\dots, F_{t} \in \F$ be as in the lemma statement and set
    $T = F_1 \cap \cdots \cap F_{t}$. Then 
    \[
        \F_{T} := \{F \cap T \ : \ F \in 
        \F \setminus \{F_1,\dots, F_{t}\} \}
    \]
    is a $\beta$-intersecting family. Note also that if 
    $A,B \in \F \setminus \{F_1,\dots, F_{t}\}$ with 
    $A\neq B$
    and 
    $A\cap T = B\cap T$, then 
    $A \cap B \cap T = (A\cap T) \cap (B\cap T) = A \cap T$. But this 
    is impossible as it implies $\alpha_{t+1} = 
    \alpha_{t+2}$, a contradiction. Since $|\F|= |\F_T| + t$ and $\F_T$ satisfies $\beta$-intersection pattern modulo $2$ with ground set $|T|$, the desired bound holds.  
\end{proof}

We call the intersection pattern $\alpha + \mathbf{1}$ (where $\mathbf{1}$ is the all $1$'s vector) \textit{the dual} 
of the $\alpha$-intersection pattern. The next lemma establishes 
that for $\alpha \in \mathbb{F}_2^k \setminus \{\mathbf{0},\mathbf{1}\}$, the functions $f_{\alpha}(n)$ and $f_{\alpha + \mathbf{1}}(n)$ are very similar. 
\begin{lemma}[Dual Lemma]\label{The Dual Lemma}
    Let $\alpha \in \mathbb{F}_2^k \setminus \{\mathbf{0},\mathbf{1}\}$. Then $f_{\alpha}(n-1) \leq f_{\alpha + \mathbf{1}}(n) \leq f_{\alpha}(n+1)$. Thus, if $f_{\alpha +\mathbf{1}}(n+1)/f_{\alpha+\mathbf{1}}(n) \to 1$ as $n \to \infty$, $f_{\alpha}(n) \sim f_{\alpha + \mathbf{1}}(n)$. 
\end{lemma}
\begin{proof}
    Let $\F \subseteq 2^{[n]}$ be an $\alpha$-intersecting family. 
    Consider the family $\F' 
    \subseteq 2^{[n+1]}$ where  
    \[
        \F' := \{F \cup \{n+1\} \ : \ F \in \F\}. 
    \]
    Note that the size of each of the $m$-wise intersection also increases by exactly one for each 
    $1\leq m \leq k$. Thus $\F'$ satisfies 
    $(\alpha + \mathbf{1})$-intersection pattern modulo $2$ and 
    $|\F| = |\F'| \leq f_{\alpha+\mathbf{1}}(n+1)$. By a similar argument, it follows that  
    $f_{\alpha + \mathbf{1}}(n-1) \leq f_{\alpha}(n) \leq f_{\alpha + \mathbf{1}}(n+1)$ and 
    thus 
    \begin{equation}\label{dual_lemma_displayed}
        \frac{f_{\alpha + \mathbf{1}}(n-1)}{f_{\alpha + \mathbf{1}}(n)} \leq \frac{f_{\alpha}(n)}{f_{\alpha + \mathbf{1}}(n)} \leq \frac{f_{\alpha + \mathbf{1}}(n+1)}{f_{\alpha + \mathbf{1}}(n)}. 
    \end{equation}

    By \eqref{dual_lemma_displayed} and using the condition $f_{\alpha +\mathbf{1}}(n+1)/f_{\alpha+\mathbf{1}}(n) \to 1$ as $n \to \infty$ twice, the squeeze theorem then implies that $f_{\alpha}(n) \sim f_{\alpha + \mathbf{1}}(n)$. \qedhere 
    
\end{proof}

It is worth noting that the condition $f_{\alpha +\mathbf{1}}(n+1)/f_{\alpha+\mathbf{1}}(n) \to 1$ as $n \to \infty$ holds for all $\alpha \in \mathbb{F}_2^k \setminus \{\mathbf{0},\mathbf{1}\}$ with $k \leq 4$ and is false for $\alpha \in  \{\mathbf{0},\mathbf{1}\}$. We leave the problem of determining if this is true for all such $\alpha$ or finding a counterexample as an interesting open problem to the reader.

The below lemma is inspired by a lemma of Frankl and Odlyzko in 
\cite[Lemma 2]{FO} and allows one to obtain a lower bound 
of $f_{\gamma}(n)$ in terms of $f_{\alpha}(\cdot)$ and $f_{\beta}(\cdot)$ where 
$\alpha + \beta = \gamma$. 
\begin{lemma}[Partition Sum Lemma]\label{lemma:PartitionSumLemma}
    Let $\alpha = (\alpha_1,\dots, \alpha_k), 
    \beta = (\beta_1,\dots, \beta_k)$ be vectors in $\mathbb{F}_2^k$ and let 
    $\gamma = \alpha + \beta$.
    Then for any $r \in [n-1]$ where 
    $f_{\alpha}(r), f_{\beta}(n-r) > k$ we have 
    \[
        f_\gamma(n) \geq \min\{f_{\alpha}(r),f_{\beta}(n-r)\}.
    \]
\end{lemma}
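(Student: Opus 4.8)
The plan is to build an explicit $\gamma$-intersecting family on $[n]$ by gluing together an extremal $\alpha$-family and an extremal $\beta$-family supported on disjoint parts of the ground set. First I would partition $[n] = A \sqcup B$ with $|A| = r$ and $|B| = n-r$; since $f_\alpha$ and $f_\beta$ depend only on the size of the ground set, I may fix a family $\mathcal{A} \subseteq 2^{A}$ satisfying the $\alpha$-intersection pattern modulo $2$ with $|\mathcal{A}| = f_\alpha(r)$, and a family $\mathcal{B} \subseteq 2^{B}$ satisfying the $\beta$-intersection pattern modulo $2$ with $|\mathcal{B}| = f_\beta(n-r)$.

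Next, set $m = \min\{f_\alpha(r), f_\beta(n-r)\}$, enumerate the first $m$ members of each family as $A_1, \dots, A_m \in \mathcal{A}$ and $B_1, \dots, B_m \in \mathcal{B}$ (any subfamily of an $\alpha$-intersecting family is still $\alpha$-intersecting), and define
\[
    \mathcal{G} := \{\, A_i \cup B_i \ : \ i \in [m] \,\} \subseteq 2^{[n]}.
\]
I would first check that these $m$ sets are genuinely distinct: since $A_i \subseteq A$, $B_i \subseteq B$, and $A \cap B = \emptyset$, the set $A_i \cup B_i$ determines both $A_i$ and $B_i$, so $A_i \cup B_i = A_j \cup B_j$ forces $A_i = A_j$, hence $i = j$ because the members of $\mathcal{A}$ are distinct. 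Thus $|\mathcal{G}| = m$.

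The heart of the argument is checking the intersection pattern. For distinct indices $i_1, \dots, i_j \in [m]$, the disjointness of $A$ and $B$ lets intersection distribute over union, so by a short induction
\[
    \bigcap_{\ell=1}^{j} \bigl( A_{i_\ell} \cup B_{i_\ell} \bigr) \;=\; \Bigl( \bigcap_{\ell=1}^{j} A_{i_\ell} \Bigr) \;\sqcup\; \Bigl( \bigcap_{\ell=1}^{j} B_{i_\ell} \Bigr),
\]
a disjoint union. Hence its size equals $\bigl|\bigcap_{\ell} A_{i_\ell}\bigr| + \bigl|\bigcap_{\ell} B_{i_\ell}\bigr| \equiv \alpha_j + \beta_j = \gamma_j \pmod 2$, using that $\mathcal{A}$ is $\alpha$-intersecting and $\mathcal{B}$ is $\beta$-intersecting. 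Since $f_\alpha(r), f_\beta(n-r) > k$ we have $m > k$, so this covers every $j \in [k]$ (and nothing is required for $j > k$), and therefore $\mathcal{G}$ satisfies the $\gamma$-intersection pattern modulo $2$. This yields $f_\gamma(n) \geq |\mathcal{G}| = m = \min\{f_\alpha(r), f_\beta(n-r)\}$, as claimed.

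As for difficulty, there is no real obstacle here: the only points needing care are the distinctness of the glued sets and the distributive identity for intersections of disjoint-support unions, both routine. The hypothesis $f_\alpha(r), f_\beta(n-r) > k$ is exactly what guarantees $m > k$, ensuring all $k$ coordinates of the $\gamma$-pattern are genuinely witnessed rather than holding vacuously.
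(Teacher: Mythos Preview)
Your proof is correct and follows essentially the same approach as the paper: both take extremal $\alpha$- and $\beta$-families on disjoint ground sets of sizes $r$ and $n-r$, glue corresponding sets pairwise, and use disjointness to split each $j$-wise intersection into a disjoint union whose parity is $\alpha_j+\beta_j=\gamma_j$. You add a little extra care about distinctness of the glued sets and about the role of the hypothesis $f_\alpha(r),f_\beta(n-r)>k$, but the argument is the same.
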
 
\begin{proof}
Let $\ell := f_\alpha(r)$ and $m := f_\beta(n-r)$.
Let 
$\A = \{A_1,\dots, A_{\ell}\}$ be a family with ground set $[r]$ satisfying $\alpha$-intersection pattern 
modulo $2$ and let $\B = \{B_1,\dots, B_{m}\}$ be a family with ground set $\{r+1,\dots, n\}$ 
satisfying $\beta$-intersection pattern modulo $2$. Consider 
the family 
\[
    \F := \{A_i \cup B_i \ : \ 1 \leq i \leq \min\{\ell,m\}\}.
\]
Since the ground sets of $\A$ and $\B$ are disjoint, $|\F| = \min\{f_{\alpha}(r), f_{\beta}(n-r)\}$. Thus we need  
only show that $\F$ satisfies $\gamma$-intersection pattern modulo $2$. 
Using again that 
the ground sets of $\A$ and $\B$ are disjoint,  
for any $t$ satisfying $1 \leq t \leq k$, 
\[
    \left|\bigcap_{i=1}^t (A_i \cup B_i) \right| = \left|\bigcap_{i=1}^t A_i \right| + \left|\bigcap_{i=1}^t B_i \right|  \equiv \alpha_t + \beta_t = \gamma_t. \qedhere 
\]
\end{proof}

\section{\texorpdfstring{$3$}{}-wise modulo \texorpdfstring{$2$}{} Patterns}\label{sec:3wise}
\subsection{The \texorpdfstring{$(1,0,0)$}{} Problem}\label{subsec:100}
Any family $\F$ that satisfies $(1,0,0)$-intersection pattern modulo $2$ necessarily satisfies $(1,0)$-intersection pattern modulo $2$ (oddtown rules). This, together with the fact that the set of all singletons follows $(1,0,0)$-intersection pattern modulo $2$, yields 
$f_{(1,0,0)}(n) = n$.
\subsection{The \texorpdfstring{$(0,1,1)$}{} Problem}\label{subsec:011}
Here we show that 
$f_{(0,1,1)}(n) = n-1$. 
Note that 
the $2$-uniform star $\{(i,1) \ i \in \{2,3,\dots, n\}\}$ satisfies  
$(0,1,1)$-intersection pattern modulo $2$ and hence witnesses the 
lower bound $n-1$. To show a matching upper bound, let $\F = \{F_1,\dots, F_m\}$ satisfy  
$(0,1,1)$-intersection pattern modulo $2$ and  
let $F_i \in \F$. Consider the family 
\[
    \A = \{F \cap F_i^C \ : \ F \in \F \setminus \{F_i\}\}.
\]
Note $\A$ satisfies $(1,0)$-intersection pattern modulo $2$ since  
$|F \cap F_i^C| = |F| - |F \cap F_i| \equiv 1 \pmod{2}$, and 
\[
    |(F \cap F_i^C) \cap (F' \cap F_i^C)| = |F_i^C \cap F \cap F'| = 
    |F \cap F'| - |F \cap F' \cap F_i| \equiv 0 \pmod{2}.
\]
As $\F$ cannot contain any singletons, 
$|F_i|\geq 2$.
Hence 
$|\F| - 1 \leq |F_i^C| \leq n-2$,
as desired.
\subsection{The \texorpdfstring{$(0,0,0)$}{} Problem}\label{subsec:000}
Any family $\F$ that satisfies 
$(0,0,0)$-intersection pattern modulo $2$
necessarily satisfies 
$(0,0)$-intersection pattern modulo $2$. Hence 
$|\F| \leq 2^{\lfloor n/2 \rfloor}$.
On the other hand the canonical extremal  
construction for families satisfying $(0,0)$-intersection pattern 
also satisfies the $(0,0,0)$-intersection pattern.
Thus $f_{(0,0,0)}(n) = 2^{\lfloor n/2 \rfloor}$.
\subsection{The \texorpdfstring{$(1,1,1)$}{} Problem}\label{subsec:111}
We show that $f_{(1,1,1)}(n) = 2^{\lfloor (n-1)/2\rfloor}$. Since any family satisfying $(1,1,1)$-intersection pattern modulo $2$ also satisfies $(1,1)$-intersection 
pattern modulo $2$, we have 
from the dual eventown theorem (see \autoref{dual-eventown-canoncial} in the appendix) 
\[
    f_{(1,1,1)}(n) \leq f_{(1,1)}(n) = \begin{cases}
        2^{\frac{n-1}{2}} & \text{$n$ odd} \\
        2^{\frac{n-2}{2}} & \text{$n$ even} \; .
    \end{cases}
\]
These upper bounds are best possible as the canonical extremal family satisfying 
$(1,1)$-intersection pattern modulo $2$
of \autoref{dual-eventown-canoncial} also satisfies $(1,1,1)$-intersection pattern modulo $2$. 
\subsection{The \texorpdfstring{$(1,0,1)$}{} Problem}\label{subsec:101} 
We will show that 
\[
    f_{(1,0,1)}(n) = 
    \begin{cases} 
        n & \text{$n$ even} \\
        n-1 & \text{$n$ odd} \; .
    \end{cases}
\]
\textbf{Case 1: } Let $n$ be even.  
Since any family satisfying $(1,0,1)$-intersecting pattern modulo $2$ satisfies $(1,0)$-intersection pattern 
modulo $2$, 
it follows that $f_{(1,0,1)}(n) \leq n$. Note also that since $n$ is even, the family 
$\binom{[n]}{n-1}$ satisfies $(1,0,1)$-intersection pattern modulo $2$ and has size $n$. 
\par 
\textbf{Case 2: } For $n$ odd, 
    we look at the family 
    $\binom{[n-1]}{n-2}$ to get a family of size $n-1$. 
    To prove the upper bound,  
    let $\F$ satisfy $(1,0,1)$-intersection pattern modulo $2$.  Since $n$ is odd, we cannot 
    have sets of size $n-1$. Moreover, if $[n] \in \F$ then $\F$ cannot have more than one element 
    (since $|F\cap [n]| = |F|$ is odd for any $F \in \F$). Thus, all sets in $\F$ have size at most  
    $n-2$. We are then done by the \nameref{lemma:trace} together with the dual oddtown theorem. 
    \par 
    As an aside one can show that the above example is the unique extremal example by proceeding as follows: 
    If there exists a set of size strictly smaller than $n-2$, then the \nameref{lemma:trace} together with the dual oddtown theorem imply that $|\F| \leq n-3$. Thus, we must have $|F|= n-2$ for all $F \in \F$ and for all $F_1, F_2 \in \F$, we have $|F_1 \cap F_2| = n-3$ as $n-4$ is odd. Consider now 
    $\F^C := \{F^C \ : \ F \in \F\} \subseteq \binom{[n]}{2}$.
    Note that $|F_1^C \cap F_2^C| = |(F_1\cup F_2)^C| = 1$ for all $F_1, F_2 \in \F$. 
    Thus, $\F^C$ is an {\it intersecting} family of $2$-element sets. By the Erd\H{o}s-Ko-Rado Theorem \cite{EKR}, 
    \[
        |\F| = |\F^C| \leq \binom{n-1}{2-1} = n-1,
    \]
    with equality if and only if 
    $\F^C$ is a star for $n\geq 5$.

\subsection{The \texorpdfstring{$(0,1,0)$}{} Problem}\label{subsec:010}
Note that a family satisfying $(0,1,0)$-intersection pattern modulo $2$ necessarily satisfies  
$(0,1)$-intersection pattern modulo $2$, giving  
\[
    f_{(0,1,0)}(n) \leq f_{(0,1)}(n) = 
    \begin{cases} 
        n-1 & \text{$n$ is even} \\
        n & \text{$n$ is odd} 
    \end{cases}
\]
When $n$ is odd this upper bound is witnessed 
by the family $\binom{[n]}{n-1}$. When 
$n$ is even the upper bound is witnessed by the 
family $\binom{[n-1]}{n-2}$.

\subsection{The \texorpdfstring{$(1,1,0)$}{} Problem}\label{subsec:110}
We will show that 
\[
    f_{(1,1,0)}(n) = 
    \begin{cases}
        \lfloor n/2 \rfloor + 1 & \text{$n \equiv 2,3 \pmod{4}$} \\
        \lfloor n/2 \rfloor & \text{$n \equiv 0,1 \pmod{4}$}
    \end{cases}
\]
We start with the below construction.
\begin{const}\label{construction:110_family}
    \textbf{Case  1:} $n \equiv 2 \pmod{4}$. Let $n = 4k+2$ and  
    define $\A = \{A_0,A_1,\dots, A_{2k+1}\}$ where 
\[ 
    A_0 = [2k+2, 4k+2] 
    \quad \text{and} \quad 
    A_i = ([2k+1] \setminus \{i\}) \cup \{2k+1 + i\} \quad \text{for $i \in [2k+1]$},
\]
which are easily shown to satisfy $(1,1,0)$ rules. 
\begin{figure}[h!]
    \centering
    \begin{tikzpicture}[thick,
  fsnode/.style={circle, fill=black, inner sep = 2pt},
  ssnode/.style={circle, draw, inner sep = 2pt},
    shorten >= -5pt,shorten <= -5pt
]
\draw[very thick] (-5,0) -- (0,0) ; 
\draw[very thick, dotted] (0,0) -- (5,0);
\node[ssnode, label={[xshift=0em, yshift=0em] \small$i$}] at (-3,0) (a) {};
\node[fsnode, label={[xshift=0em, yshift=0] \small$2k+1+i$}] at (2,0) (a) {};
\end{tikzpicture}
\caption{The sets in \protect\autoref{construction:110_family}}
\end{figure}

\par 
\textbf{Case 2: } $n\not\equiv 2 \Mod{4}$. 
We consider the same construction above on 
$4k+2$ vertices with isolated vertices. 
More precisely  
\begin{align*}
\begin{array}{c | c | c}
\text{ground set} & \text{construction} & \text{size of family}
\\
\hline\hline
n= 4k & \text{construction on $[4k-2]$ with $2$ isolated vertices} & 2k = n/2  \\
n = 4k+1 & \text{construction on $[4k-2]$ with $3$ isolated vertices} & 2k = \lfloor n/2 \rfloor
\\
n = 4k+2 & \text{construction on $[4k+2]$ with $0$ isolated vertices} &  2k+2 = n/2 + 1 \\
n = 4k+3 & \text{construction on $[4k+2]$ with 
$1$ isolated vertex} & 2k + 2 = \lfloor n/2 \rfloor + 1
\end{array}
\end{align*}

One can readily check that $\A$ satisfies $(1,1,0)$-intersection pattern 
modulo $2$ in each of the above cases. 
\end{const}

This construction is best possible as we will now show. 
The below lemma\footnote{a Corollary of the Trace Lemma} says that any family satisfying $(1,1,0)$-intersection 
pattern modulo $2$ must be small if it contains small 
sets. 
\begin{lemma}\label{lemma:110sets}
Let $\F  = \{F_1,\dots, F_m\} \subset 2^{[n]}$ satisfy $(1,1,0)$-intersection pattern modulo $2$  and consider any $F_i \in \F$. Then $|\F|-1 \leq |F_i|$. 
\end{lemma}
\begin{proof}
Consider the family $\F_i = \{ F_j \cap F_i: j \neq i\}$ and note that this family satisfies oddtown rules on ground set $F_i$. As such, the stated bound holds. 
\end{proof}
The next lemma gives us that any family satisfying 
$(1,1,0)$-intersection pattern modulo $2$ must 
be small if it contains large sets.
\begin{lemma}\label{lemma:110complement}
Let $\F = \{F_1,\dots, F_m\} \subset 2^{[n]}$ satisfy $(1,1,0)$-intersection pattern modulo $2$. Then for any $F_i \in \F$ 
\begin{itemize}
    \item[(i)] $|\F|-1 \leq |F_i^C|$, and
    \item[(ii)] $|\F| -1 \leq |F_i^C| - 1$ when $n$ is 
    odd.
\end{itemize} 
\end{lemma}
\begin{proof}
Consider the family $\F_i = \{ F_j \cap F_i^C: j \neq i\}$. 
We will show that $\F_i$ satisfies $(0,1)$-intersection 
pattern modulo $2$. 
First note that $|F_j \cap F_i^C|$ is even as $|F_j \cap F_i^C| 
= |F_j| - |F_j \cap F_i|$ (since 
$F_j$ is the disjoint union of 
$F_j \cap F_i$ and $F_j\cap F_i^C$) 
and $|F_j|$ and $|F_j \cap F_i|$ are both odd. Further 
(noting again that 
$F_j \cap F_k$ is the disjoint union of 
$F_j \cap F_k \cap F_i^C$ and $F_j \cap F_k \cap F_i$) we have that 
\[ 
|F_j \cap F_i^C \cap F_k \cap F_i^C|= |F_j \cap F_k \cap F_i^C|= |F_j \cap F_k|- |F_j \cap F_k \cap F_i| 
\] 
is odd as $|F_j \cap F_k|$ is odd and $|F_j \cap F_k \cap F_i|$ is even. Thus 
$\F_i$ satisfies $(0,1)$-intersection pattern modulo $2$ and the desired bound holds by dual oddtown rules. 
We obtain (ii) by recalling that 
$f_{(0,1)}(n) = n-1$ for $n$ even and $|F_i^C| = n - |F_i|$ is even when $n$ is odd. 
\end{proof}
Using \autoref{lemma:110sets} and \autoref{lemma:110complement}, for any 
$F_i \in \F$ we get  
\begin{equation}\label{eq:110upperbound}
 (|\F|-1)+ (|\F|-1) \leq |F_i| + |F_i^C| = n   
 \implies |\F| \leq \lfloor n/2 \rfloor + 1 
\end{equation}
We will now sharpen the above bound depending on 
the residue of $n$ modulo $4$. Throughout the rest of this section, let $\F = \{F_1,\dots, F_m\} \subseteq 2^{[n]}$ satisfy  
$(1,1,0)$-intersection pattern modulo $2$
\par 
\textbf{(Case 1: $n\equiv 2,3 \pmod{4}$).}  Let $F_i \in \F$.
We first note that for 
$n$ even, one of $F_i$ or $F_i^C$ has size 
at most $n/2$ and for $n$ odd 
either $F_i$ or $F_i^C$ has size at most $\frac{n-1}{2}$. 
Thus, combining \cref{lemma:110sets} and \cref{lemma:110complement} we have 
\[
    |\F| - 1 \leq \min\{|F_i|, |F_i^C|\} \leq 
    \begin{cases}
        \frac{n}{2} & \text{$n$ even} \\
        \frac{n-1}{2} & \text{$n$ odd}
    \end{cases}
\]
Thus, \autoref{construction:110_family} is best possible 
when $n \equiv 2,3  \pmod{4}$. 
\par 
\textbf{(Case 2: $n\equiv 0 \pmod{4}$).} When $n \equiv 0 \pmod{4}$, we note that for $F_i \in \F$, both $F_i$ and 
$F_i^C$ must have odd size and $\frac{n}{2}$ must be even. Thus, 
$|\F| - 1 \leq \min\{|F_i|,|F_i^C|\} \leq \frac{n}{2} - 1$, giving us the desired bound. 
\par 
\textbf{(Case 3: $n\equiv 1 \pmod{4}$).} When $n \equiv 1 \pmod{4}$, for $F_i \in \F$, 
note that 
$|F_i^C|$ has even size. Thus by 
part (ii) of \autoref{lemma:110complement} 
we get the slightly better bound 
of $|\F| -1 \leq |F_i^C| - 1$. Letting 
$n = 4k+1$ we have that if there is a set $F_i \in \F$ such that 
$|F_i| \leq 2k-1$ then 
\[
    |\F| - 1 \leq 2k - 1 \implies |\F| \leq 2k
\]
as desired. Further, if there is a set $F_j \in \F$ such that 
$|F_j| \geq 2k+1$ then we have 
\begin{align*}
    |\F| - 1 &\leq |F_j^C| - 1  \\
    &= n - |F_j| - 1 \\
    &\leq 4k+1 - (2k+1) - 1 \\
    &= 2k - 1. 
\end{align*}
This gives us $|\F| \leq 2k$ as desired. The only other possibility is 
that every $F \in \F$ has size $2k$. But this is impossible as $\F$ satisfies 
$(1,1,0)$-intersection pattern modulo $2$ and hence has odd-sized sets.

\subsection{The \texorpdfstring{$(0,0,1)$}{} Problem}\label{subsec:001}
We show that 
\[
    f_{(0,0,1)}(n) = 
    \begin{cases}
        \lfloor n/2 \rfloor + 1 & \text{
        $n \equiv 3 \pmod{4}$} \\
        \lfloor n/2 \rfloor & \text{$n\equiv 0,1,2 \pmod{4}$} 
    \end{cases}
\]
For the lower bound, we make a slight adjustment to 
\autoref{construction:110_family}.
\begin{const}\label{construction:001_family}
\textbf{Case  1:} $n \equiv 3 \pmod{4}$. Let 
$n = 4k+3$ and 
define $\A = \{A_0,A_1,\dots, A_{2k+1}\}$ 
where 
\[ 
    A_0 = [2k+2, 4k+3] 
    \quad \text{and} \quad 
    A_i = ([2k+1] \setminus \{i\}) \cup \{2k+1 + i, 4k+3\} \quad \text{for $i \in [2k+1]$}.
\]

\begin{figure}[h!]
    \centering
    \begin{tikzpicture}[thick,
  fsnode/.style={circle, fill=black, inner sep = 2pt},
  ssnode/.style={circle, fill=red, inner sep = 2pt},
  esnode/.style={circle, draw, inner sep = 2pt},
    shorten >= -5pt,shorten <= -5pt
]
\draw[very thick] (-5,0) -- (0,0) ; 
\draw[very thick, dotted] (0,0) -- (5,0);
\node[esnode, label={[xshift=0em, yshift=0em] \small$i$}] at (-3,0) (a) {};
\node[fsnode, label={[xshift=0em, yshift=0] \small$2k+1+i$}] at (2,0) (a) {};
\node[fsnode, label={[xshift = 0em, yshift=-2em] \small$4k+3$}] at (5.15,0) (a) {};
\end{tikzpicture}
\caption{The sets in \protect\autoref{construction:001_family}}
\end{figure}
\par 
\textbf{Case 2: } $n\not\equiv 3 \pmod{4}$. 
We add isolated vertices in a similar 
manner as \cref{construction:001_family}:
\begin{align*}
\begin{array}{c | c | c}
\text{ground set} & \text{construction} & \text{size of family}
\\
\hline\hline
n= 4k & \text{construction on $[4k-1]$ with $1$ isolated vertex} & 2k = n/2  \\
n = 4k+1 & \text{construction on $[4k-1]$ with $2$ isolated vertices} & 2k = \lfloor n/2 \rfloor
\\
n = 4k+2 & \text{construction on $[4k-1]$ with $3$ isolated vertices} &  
2k =  n/2 \\
n = 4k+3 & \text{construction on $[4k+3]$ with 
$0$ isolated vertices} & 2k + 2 = \lfloor n/2 \rfloor + 1
\end{array}
\end{align*}

One can readily check that this family satisfies $(0,0,1)$-intersection pattern modulo $2$ in 
each of the above cases. 
\end{const}
For the upper bound, we first obtain ``duals" of \autoref{lemma:110sets} and \autoref{lemma:110complement} below. 
\begin{lemma}\label{001_trace_lemma}
    Let $\F = \{F_1,\dots, F_m\} \subset 2^{[n]}$ satisfy $(0,0,1)$-intersection pattern modulo $2$.  Then, for any $F_i \in \F$,  
    \begin{itemize}[noitemsep]
        \item[(i)] $|\F| - 1  \leq |F_i| - 1$,  
        \item[(ii)] $|\F| - 1 \leq |F_i^C|$ (when $n$ is odd), and 
        \item[(ii-e)] $|\F| - 1 \leq |F_i^C| - 1$ (when $n$ is even).
    \end{itemize}
\end{lemma}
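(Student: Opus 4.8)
The plan is to mirror, in ``dual'' form, the arguments used for \cref{lemma:110sets} and \cref{lemma:110complement}: trace the family $\F$ onto $F_i$ and onto $F_i^C$, observe that each trace satisfies a $2$-wise pattern, and invoke the corresponding $2$-wise theorem. Throughout, the governing parity fact is that a family with a $(0,0,1)$-intersection pattern modulo $2$ has \emph{all} members of even size (from $\alpha_1 = 0$). We may assume $|\F| \geq 3$, since otherwise (i)--(ii-e) hold for trivial reasons; in particular, when $|\F| \geq 3$ every triple $F_i, F_j, F_k$ has odd-sized intersection, so no member of $\F$ equals $\emptyset$ and no member equals $[n]$.

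For (i), write $\F = \{F_1, \dots, F_\ell\}$ with $\ell = |\F|$ and fix $F_i$. Consider $\F_i := \{F_j \cap F_i : j \neq i\}$, a family of subsets of $F_i$. Since $|F_j \cap F_i| \equiv 0$ and, for distinct $j,k \neq i$, $|(F_j \cap F_i) \cap (F_k \cap F_i)| = |F_i \cap F_j \cap F_k| \equiv 1$, the family $\F_i$ satisfies the $(0,1)$-intersection pattern modulo $2$ on ground set $F_i$. The map $j \mapsto F_j \cap F_i$ is injective: if $F_j \cap F_i = F_k \cap F_i$ for distinct $j,k \neq i$ then $|F_i \cap F_j \cap F_k| = |F_i \cap F_j|$ is even, contradicting $\alpha_3 = 1$. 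Hence $|\F_i| = \ell - 1$, and since $|F_i|$ is even and positive, the dual (reverse) oddtown theorem ($f_{(0,1)}(m) = m$ for $m$ odd, $m-1$ for $m$ even) gives $\ell - 1 \leq f_{(0,1)}(|F_i|) = |F_i| - 1$.

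For (ii) and (ii-e), fix $F_i$ again and consider $\F_i^C := \{F_j \cap F_i^C : j \neq i\}$, a family of subsets of $F_i^C$. Here $|F_j \cap F_i^C| = |F_j| - |F_j \cap F_i| \equiv 0$, while for distinct $j, k \neq i$,
\[
    |(F_j \cap F_i^C) \cap (F_k \cap F_i^C)| = |F_j \cap F_k \cap F_i^C| = |F_j \cap F_k| - |F_j \cap F_k \cap F_i| \equiv 0 - 1 \equiv 1 \pmod{2},
\]
so $\F_i^C$ satisfies the $(0,1)$-intersection pattern modulo $2$ on ground set $F_i^C$. The same argument as before shows this trace map is injective, so $|\F_i^C| = \ell - 1 \leq f_{(0,1)}(|F_i^C|)$. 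Finally $|F_i^C| = n - |F_i| \equiv n \pmod{2}$ since $|F_i|$ is even, so: if $n$ is odd then $|F_i^C|$ is odd and $f_{(0,1)}(|F_i^C|) = |F_i^C|$, giving (ii); if $n$ is even then $|F_i^C|$ is even and positive and $f_{(0,1)}(|F_i^C|) = |F_i^C| - 1$, giving (ii-e).

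There is no substantial obstacle; the proof is essentially bookkeeping. The two points requiring care are the injectivity of each trace map---which relies crucially on the $3$-wise intersections being \emph{odd}---and correctly tracking the parities of $|F_i|$ and $|F_i^C|$ so as to pick the right branch of the dual oddtown bound, for which the key input is that $(0,0,1)$ forces every member of $\F$ to have even size. The degenerate possibilities $F_i \in \{\emptyset, [n]\}$ and $|\F| \leq 2$ are dispatched exactly as noted at the outset.
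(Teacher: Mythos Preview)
Your proof is correct and follows the same approach as the paper: trace $\F$ onto $F_i$ and onto $F_i^C$, verify that each traced family satisfies the $(0,1)$-intersection pattern modulo $2$, and apply the dual oddtown bound using the parity of the respective ground set ($|F_i|$ always even; $|F_i^C|\equiv n\pmod 2$). You are simply more explicit than the paper about the injectivity of the two trace maps and about the degenerate small-$|\F|$ cases.
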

\begin{proof}
The family $\F_i := \{F \cap F_i \ : \ F\in \F, F \neq F_i\}$ satisfies $(0,1)$-intersection pattern modulo $2$ 
and hence has size 
at most $|F_i| - 1$ (since $|F_i|$ is even). 
The family $\F_i' := \{F \cap F_i^C \ : \ F \in \F, F \neq F_i\}$ satisfies $(0,1)$-intersection pattern 
modulo $2$ with 
ground set $|F_i^C|$. (This can be seen by using exactly the same 
reasoning as in \cref{lemma:110complement}.)
When $n$ is even, we obtain the slightly better bound $|\F| - 1 \leq |F_i^C| - 1$ since 
$\F_i$ satisfies $(0,1)$-intersection pattern modulo $2$ with ground 
set of size $|F_i^C| = n - |F_i|$, which is even. 
\end{proof}
We now match the bounds from \cref{construction:001_family} in a similar way as 
was done in \cref{subsec:110}. Again, throughout the 
remainder of this section 
we 
let $\F = \{F_1,\dots, F_m\} \subset 2^{[n]}$ satisfy $(0,0,1)$-intersection pattern modulo $2$.
\par 
\textbf{(Case 1: $n\equiv 0,2,3 \pmod{4}$).}
    For $n$ even, (i) and (ii-e) of \autoref{001_trace_lemma} gives that 
    for any $F_i \in \F$, 
    \[
        (|\F| - 1) + (|\F| - 1) \leq (|F_i| - 1) + (|F_i^C| - 1) = n - 2 \implies 
        |\F| \leq \frac{n}{2}.
    \]
    Similarly, for $n$ odd,  
    \[
        (|\F| - 1) + (|\F| - 1) \leq (|F_i| - 1) + |F_i^C| = n - 1 \implies 
        |\F| \leq \frac{n+1}{2} = \left\lfloor \frac{n}{2} \right\rfloor + 1.
    \]
    \par 
\textbf{(Case 2: $n\equiv 1 \pmod{4}$).}
By \cref{001_trace_lemma},
if $F_i \in \F$ with $|F_i| \leq 2k - 2$, then  
\[
    |\F| -1 \leq |F_i| - 1 \leq 2k-3 \implies |\F| \leq 2k-2 \leq \left\lfloor \frac{n}{2} \right\rfloor.
\]
Similarly, if $F_i \in \F$ with $|F_i| \geq 2k+2$, then  
\[
    |\F| - 1 \leq |F_i^C| = n - |F_i|  \leq 4k+1 - (2k+2) = 2k-1 \implies 
    |\F| \leq 2k = \left\lfloor \frac{n}{2} \right\rfloor.
\]
The only other possibility is that every set in $\F$ is of size $2k$. 
However, in this case, we may apply the dual oddtown result for 
even sized ground set 
together with \cref{001_trace_lemma} 
to obtain 
\[
    (|\F| - 1) + (|\F| - 1) \leq (|F_i| - 1) + (|F_i^C| - 1) \implies 
    2|\F| \leq n \implies |\F| \leq \left\lfloor \frac{n}{2} \right\rfloor.
\]

\section{\texorpdfstring{$4$}{}-wise modulo \texorpdfstring{$2$}{} Patterns}\label{sec:4wise}
\subsection{The \texorpdfstring{$(0,0,0,0)$}{} Problem}\label{subsec:0000} 
Any family satisfying $(0,0,0,0)$-intersection pattern modulo $2$ also satisfies 
$(0,0)$-intersection pattern modulo $2$. Hence, 
$f_{(0,0,0,0)}(n) \leq f_{(0,0)}(n) = 2^{\lfloor n/2 \rfloor}$. 
Moreover, the canonical extremal eventown construction satisfies 
$(0,0,0,0)$-intersection pattern modulo $2$. Thus $f_{(0,0,0,0)}(n) = 2^{\lfloor n/2 \rfloor}$.

\subsection{The \texorpdfstring{$(1,0,0,0)$}{} Problem}\label{subsec:1000}
We immediately obtain 
$f_{(1,0,0,0)}(n) \leq f_{(1,0)}(n) = n$.
Similarly, the singletons satisfy $(1,0,0,0)$-intersection pattern modulo $2$ giving us $f_{(1,0,0,0)}(n) = n$.

\subsection{The \texorpdfstring{$(1,0,1,0)$}{} Problem}\label{subsec:1010}
We show that 
\[
    f_{(1,0,1,0)}(n) = 
    \begin{cases}
        n & \text{if $n$ is even} \\
        n-1 & \text{if $n$ is odd}
    \end{cases}
    \ 
    .
\]
We first note that 
$f_{(1,0,1,0)}(n) \leq f_{(1,0)}(n) = n$. 
For $n$ even, the family $\binom{[n]}{n-1}$ is a  
family of size $n$ satisfying 
$(1,0,1,0)$-intersection pattern modulo $2$. When $n$ is 
odd, we may look at $\binom{[n-1]}{n-2}$ to obtain a lower 
bound of $n-1$, and this is best possible as can be seen via the \nameref{lemma:trace}. 
Indeed, let $\F$ be family of size 
$f_{(1,0,1,0)}(n)$ satisfying $(1,0,1,0)$-intersection pattern 
modulo $2$ 
and let 
$A \in \F$. Note that if $A = [n]$ then we must have $|\F| = 1$.
So we may assume $|A| \leq n-2$. The  
\nameref{lemma:trace} then gives 
$|\F| - 1 = |\F_{A}| \leq f_{(0,1)}(n-2) =  n-2$ as desired.

\subsection{The \texorpdfstring{$(0,0,1,0)$}{} Problem}\label{subsec:0010}
A family satisfying $(0,0,1,0)$-intersection pattern modulo $2$ also satisfies  
$(0,0,1)$-intersection pattern modulo $2$. Thus 
$f_{(0,0,1,0)}(n) \leq f_{(0,0,1)}(n)$. 
However, this is best possible as \autoref{construction:001_family}, the extremal 
constructions of families satisfying $(0,0,1)$-intersection pattern modulo $2$, is also a family satisfying 
$(0,0,1,0)$-intersection pattern modulo $2$. 
\subsection{The \texorpdfstring{$(0,1,0,0)$}{} Problem}\label{subsec:0100} 
We show that $f_{(0,1,0,0)}(n) \sim \sqrt{2n}$. 
Let 
    $b_2(n) := \max\left\{ \binom{j}{2} \ :  \binom{j}{2} \leq n \right\}$ be the largest triangular 
    number that is at most $n$. 
    We first show 
    $f_{(0,1,0,0)}(n) \leq 
    \sqrt{2b_2(n)} + 1 \leq 
    \sqrt{2n} + 1$. 
    Let $\F$ satisfy $(0,1,0,0)$-intersection pattern modulo $2$, be of maximum size, and have ground 
    set $[n]$. Abusing notation slightly, we will 
    write the set of pairwise intersections from 
    $\F$ as 
    \[
        \binom{\F}{2} := \{F_1 \cap F_2 \ : \ F_1,F_2 \in \F, F_1\neq F_2\}.
    \]
    Notice that $\binom{\F}{2}$ satisfies $(1,0)$-intersection pattern modulo $2$ with  
    ground set $[n]$. 
    Since $F_1 \cap F_2 \neq F_3 \cap F_4$ whenever $\{F_1,F_2\} \neq \{ F_3,F_4\}$ as $\F$ satisfies $(0,1,0,0)$-intersection pattern modulo $2$,  
    \[
        \binom{f_{(0,1,0,0)}(n)}{2} = \binom{|\F|}{2}
        = \left| \binom{\F}{2} \right| 
         \leq b_2(n) \leq n. 
    \]
    Rearranging gives us the desired upper bound. For the lower bound we have the below construction. 
\begin{const}\label{0100construction}
    For $n$ odd, define 
    $\A = \{A_1,\dots, A_n \}$ where 
    $A_i = \{e \in \binom{[n]}{2} \ : \ i \in e\}$. This family satisfies the $(0,1,0,0)$-intersection pattern modulo $2$. 
    \begin{figure}[h!]
    \centering
    \begin{tikzpicture}[thick,
  fsnode/.style={circle, fill=black, inner sep = 2pt},
  ssnode/.style={circle, fill=red, inner sep = 2pt},
  esnode/.style={circle, draw, inner sep = 2pt}
]
\begin{scope}[xshift = 0cm, yshift = 0cm]
        \foreach \i in {1,2,3,4,5,6,7}{%
        \node[fsnode] (\i) at (\i*51.4: 2cm) {};
        }
        \foreach \j in {1,2,3,4,5,6}{%
            \foreach \k in {1,2,3,4,5,6}{%
                \draw[-, dotted] (\j) to (\k) ;
        } 
        }
        \node[label = {[xshift = .5em, yshift = -.5em] $i$}] (l) at (0: 2cm) { };
        \draw[-,  very thick] (7) to (1);
        \draw[-,  very thick] (7) to (2);
        \draw[-,  very thick] (7) to (3);
        \draw[-,  very thick] (7) to (4);
        \draw[-,  very thick] (7) to (5);
        \draw[-,  very thick] (7) to (6);
    \end{scope}
    \end{tikzpicture}
    \caption{The sets in \protect\autoref{0100construction} for $n=7$}
    \end{figure}
\end{const}
The above construction 
yields the lower bound $f_{(0,1,0,0)}(n) 
\geq \sqrt{2n}$ when $n$ is of the 
form $\binom{j}{2}$ for some positive integer $j$. 
\par
To show that $f_{(0,1,0,0)}(n) \sim \sqrt{2n}$, we note first that the gap between 
$\binom{k}{2}$ and $\binom{k-2}{2}$ is $2k-3$. So for a general ground set $[n]$ there are  
at most $4\sqrt{n}$ of the elements of the ground set 
that are isolated vertices when applying \autoref{0100construction}.
So, 
$\sqrt{2(n - 4\sqrt{n})} \leq f_{(0,1,0,0)}(n) \leq \sqrt{2n} + 1$. 
It follows that $f_{(0,1,0,0)}(n) \sim \sqrt{2n}$, as desired.
\subsection{The \texorpdfstring{$(0,0,1,1)$}{} Problem}\label{subsec:0011}
We obtain an upper bound of 
    $f_{(0,0,1,1)}(n) 
    \leq b_2(n) \leq \sqrt{2n} + 1$, 
by using a similar argument
as in 
the beginning of 
\cref{subsec:0100}.
As for the construction, it is almost the same as \cref{0100construction} but with a 
small adjustment of adding an isolated vertex to each set and adjusting the parity of 
the vertex set accordingly.
\begin{const}
    For $n$ even,
    define 
    $\A = \{A_1,\dots, A_{n}\}$ where 
    $A_i := \{e\in \binom{[n]}{2} \ : \ i \in e\} \cup \{n+1\}$. 
    This family satisfies $(0,0,1,1)$-intersection pattern 
    modulo $2$. 
\end{const}
Using similar reasoning as in the last paragraph of $\cref{subsec:0100}$, we get $f_{(0,0,1,1)}(n) \sim \sqrt{2n}$. 

\subsection{The \texorpdfstring{$(0,0,0,1)$}{} Problem}\label{subsec:0001}
We show that $f_{(0,0,0,1)}(n) \sim \sqrt{2n}$. 
To prove that $f_{(0,0,0,1)}(n) \leq \sqrt{2n}+1$, we first give a lower bound for  
$f_{(0,1,0,0)}(n)$ in 
terms of $f_{(0,0,0,1)}(n)$ using the \nameref{lemma:PartitionSumLemma}. 
We then combine this 
with the upper bound for $f_{(0,1,0,0)}(n)$ we computed in \cref{subsec:0100}.

\par 
Indeed, first recall that  
$f_{(0,1,0,0)}(n) \leq \sqrt{2n}+1$ by \cref{subsec:0100}. 
On the other hand, we notice that 
    $(0,1,0,0) = (0,0,0,1) + (0,1,0,1)$ in $\mathbb{F}_2^4$.  
Thus, by the \nameref{lemma:PartitionSumLemma},   
\[
    f_{(0,1,0,0)}(n) \geq \min\Big\{f_{(0,1,0,1)}(\lfloor 2\sqrt{n}\rfloor), f_{(0,0,0,1)}(n- \lfloor 2\sqrt{n}\rfloor )\Big\}.
\]
If $f_{(0,1,0,1)}(\lfloor 2\sqrt{n}\rfloor ) \leq f_{(0,0,0,1)}(n-\lfloor 2\sqrt{n}\rfloor)$,
then 
\begin{align*}
    \sqrt{2n} + 1 &\geq f_{(0,1,0,0)}(n) \geq \min\Big\{f_{(0,1,0,1)}(\lfloor 2\sqrt{n} \rfloor ), 
    f_{(0,0,0,1)}(n- \lfloor 2\sqrt{n}\rfloor )\Big\} 
    = f_{(0,1,0,1)}(\lfloor 2\sqrt{n} \rfloor ) 
    \geq \lfloor 2\sqrt{n} \rfloor - 1,
\end{align*}
a contradiction for all $n\geq 25$. Thus 
\begin{align*}
\sqrt{2n} + 1 \geq f_{(0,1,0,0)}(n) 
\geq \min\Big\{f_{(0,1,0,1)}(\lfloor 2\sqrt{n}\rfloor ), f_{(0,0,0,1)}(n- \lfloor 2\sqrt{n} \rfloor )\Big\} 
= f_{(0,0,0,1)}(n-\lfloor 2\sqrt{n}\rfloor ). 
\end{align*}
For every $\epsilon > 0$, there is an $n_0$ such that for all $n\geq n_0$ we have 
$(1 - \epsilon)n < n - 2\sqrt{n}$. Thus  
\[
    f_{(0,0,0,1)}\Big( \lfloor (1-\epsilon)n\rfloor \Big) \leq f_{(0,0,0,1)}(n - \lfloor 2\sqrt{n}\rfloor ) \leq \sqrt{2n} + 1, 
\]
since $f_{(0,0,0,1)}(n)$ is nondecreasing. This completes 
the proof of the upper bound. 
\begin{const}\label{0001construction}
For $n$ odd, 
define $\A = \{A_1,\dots, A_{n}\}$ where 
$A_i := \Big([n] \setminus \{i\}\Big) \cup 
\{e \in \binom{[n]}{2} \ : \ i \in e\}$.
    \begin{figure}[h!]
    \centering
    \begin{tikzpicture}[thick,
  fsnode/.style={circle, fill=black, inner sep = 2pt},
  ssnode/.style={circle, fill=red, inner sep = 2pt},
  esnode/.style={circle, draw, inner sep = 2pt}
]
\begin{scope}[xshift = -2cm, yshift = 0cm]
         \foreach \i in {1,2,3,4,5,6,7}{%
        \node[fsnode] (\i) at (\i*51.43: 2cm) {};
        }
        \foreach \j in {1,2,3,4,5}{%
            \foreach \k in {1,2,3,4,5,6}{%
                \draw[-, dotted] (\j) to (\k) ;
        } 
        }
        \node[label = {[xshift = .5em, yshift = -.5em] $i$}] (l) at (0: 2cm) { };
        \draw[-,  very thick] (7) to (1);
        \draw[-,  very thick] (7) to (2);
        \draw[-,  very thick] (7) to (3);
        \draw[-,  very thick] (7) to (4);
        \draw[-,  very thick] (7) to (5);
        \draw[-,  very thick] (7) to (6);
    \end{scope}
    \begin{scope}[xshift = 7cm, yshift = 0cm]
        \draw[very thick] (-5,0) -- (0,0) ;
        \node[esnode, label={[xshift=0em, yshift=0] \small$i$}] (e) at (-1.5,0) { };
        \draw[very thick] (-5,.2) -- (-5,-.2);
        \draw[very thick] (-5,.2) -- (-5,-.2);
        \draw[very thick] (0,.2) -- (0,-.2);
        \node[label = {[xshift = 0em, yshift =0] $1$}] (leftend) at (-5,0) { };
        \node[label = {[xshift = 0em, yshift =0] $7$}] (leftend) at (0,0) { };
    \end{scope}
    \end{tikzpicture}
    \caption{The sets in \protect\autoref{0001construction} for $n = 7$}
    \end{figure}
    \par 
    This construction 
    satisfies $(0,0,0,1)$-intersection pattern modulo $2$.
\end{const}
The above construction gives a lower bound of $2k+1$ when the ground 
set has size $\binom{2k+1}{2} + 2k+1$. As before, when the ground set 
$[n]$ does not have this size, we need only set aside at most $4\sqrt{n}$ vertices 
from the ground set $[n]$. This gives us 
a lower bound of $f_{(0,0,0,1)}(n) \geq \sqrt{2(n - 4\sqrt{n})}$ for general $n$. By the same reasoning as the last paragraph of 
\cref{subsec:0100} we get $f_{(0,0,0,1)}(n) \sim \sqrt{2n}$.
\subsection{The \texorpdfstring{$(0,1,1,0)$}{} Problem}\label{subsec:0110}
The upper and lower bound are almost exactly the same as that proved for $f_{(0,0,0,1)}(n)$ 
in \cref{subsec:0001}. Indeed 
 $(0,0,1,1) = (0,1,1,0) + (0,1,0,1)$ in $\mathbb{F}_2^4$.  
 We then combine the \nameref{lemma:PartitionSumLemma} and the 
 bounds for $f_{(0,1,1,0)}(n)$ and $f_{(0,1,0,1)}(n)$
 in the same 
 way as in \cref{subsec:0001}. So for every $\epsilon > 0$, 
 there is an $n_0$ such that for all $n\geq n_0$,
\[
f_{(0,1,1,0)}\Big( \lfloor (1-\epsilon)n \rfloor \Big) \leq 
 f_{(0,1,1,0)}(n - \lfloor 2\sqrt{n}\rfloor ) \leq \sqrt{2n} + 1.
\]
The construction is similar to \autoref{0001construction}
but with an appropriate parity adjustment.
\begin{const}
For $n$ even,  
define $\A = \{A_1,\dots, A_{n}\}$ where 
$A_i := \Big([n] \setminus \{i\}\Big) \cup 
\{e \in \binom{[n]}{2} \ : \ i \in e\}$.
This family  
    satisfies $(0,1,1,0)$-intersection pattern modulo $2$. 
\end{const}

\subsection{Proof of \cref{thm:genk}}
Let $\alpha \in \ff{k}$. We may assume that $(\alpha_1, \alpha_2, \alpha_3, \alpha_4)$ is one of  
$(0,0,0,0)$, $(1,0,0,0)$, $(1,0,1,0)$ or $(0,0,1,0)$ as otherwise $f_\alpha(n) \leq f_{(\alpha_1, \alpha_2,\alpha_3, \alpha_4)}(n) = O(\sqrt{n})$. The analogous statement holds for $\alpha + \mathbf{1}$ 
by the
\nameref{The Dual Lemma}.

If $(\alpha_1, \alpha_2, \alpha_3, \alpha_4)=(0,0,0,0)$, then as $\alpha \neq \vb{0}$, there exist a minimum $k \in \N$ so that $\alpha_k=1$. The result then follows by noting that $(\alpha_{k-3},\alpha_{k-2}, \alpha_{k-1}, \alpha_k) = (0,0,0,1)$ and the \nameref{lemma:trace}, it follows that $f_\alpha(n) = O(\sqrt{n})$. 

If $(\alpha_1, \alpha_2, \alpha_3, \alpha_4)=(1,0,0,0)$, then as $\alpha \neq (1,0,\ldots,0)$, there exist a minimum $k \in \N$ so that $\alpha_k=1$. The result then follows by noting that $(\alpha_{k-3},\alpha_{k-2}, \alpha_{k-1}, \alpha_k) = (0,0,0,1)$ and the \nameref{lemma:trace}, it follows that $f_\alpha(n) = O(\sqrt{n})$.

If $(\alpha_1, \alpha_2, \alpha_3, \alpha_4)=(1,0,1,0)$, then as $\alpha \neq (1,0,1,0 \ldots)$, there exists minimal $2k-1 \in \N$ so that $\alpha_{2k-1}=0$ or minimal $2k \in \N$ so that $\alpha_{2k}=1$. Then, in the first case we have that $(\alpha_{2k-4}, \alpha_{2k-3}, \alpha_{2k-2}, \alpha_{2k-1}) = (0,1,0,0)$ and the result follows by the \nameref{lemma:trace}. In the second case, we have $(\alpha_{2k-3}, \alpha_{2k-2}, \alpha_{2k-1}, \alpha_{2k}) = (1,0,1,1)$ and the result follows by the 
\nameref{lemma:trace} together with the 
\nameref{The Dual Lemma}. 

If $(\alpha_1, \alpha_2, \alpha_3, \alpha_4)=(0,0,1,0)$, then as $\alpha \neq (0,0,1,0,1, \ldots)$, there exists minimal $2k-1 \in \N$ so that $\alpha_{2k-1}=1$ or minimal $2k \in \N$ so that $\alpha_{2k}=0$. Then, in the first case we have that $(\alpha_{2k-4}, \alpha_{2k-3}, \alpha_{2k-2}, \alpha_{2k-1}) = (1,0,1,1)$ and the result follows by the \nameref{lemma:trace}. In the second case, we have $(\alpha_{2k-3}, \alpha_{2k-2}, \alpha_{2k-1}, \alpha_{2k}) = (0,1,0,0)$ and the result follows by the \nameref{lemma:trace} together with the \nameref{The Dual Lemma}.

The case where $(\alpha_1, \alpha_2, \alpha_3, \alpha_4)$ is one of the corresponding duals is similar. \qedhere
 
\section{\texorpdfstring{$3$}{}-wise modulo \texorpdfstring{$3$}{} Patterns}\label{sec:mod3problems}
Here we briefly sketch the bounds claimed in Table 3. 

\subsection{\texorpdfstring{$g_{(\star,0,0)}(n)$}{} Bounds}\label{subsec:star00}

For the upper bound, note that any $\F \subset 2^{[n]}$ which satisfies $(\star,0,0)$-intersection pattern modulo $3$, satisfies the classical $\Mod{3}$-oddtown rules and hence $|\F| \leq n$. The singletons show this is best possible and therefore $g_{(\star,0,0)}(n) = n$. 
\par

\subsection{\texorpdfstring{$g_{(0, \star, \star)}(n)$}{} Bounds}\label{subsec:0starstar} 
We claim that $g_{(0,\star,\star)}(n) = \Theta(n^2)$. The upper bound follows from the Deza-Frankl-Singhi Theorem \cite{DFS} as any family $\F$ which satisfies $(0,\star,\star)$-intersection pattern is necessarily a $(3, \{1,2\})$-intersecting family and hence $|\F| \leq 1+n + \binom{n}{2}$.
The lower bound follows by noting that the $3$-uniform star $\{A \in \binom{[n]}{3} \ : \ 1 \in A\}$ satisfies $(0,\star,\star)$-intersection pattern modulo $3$ and has size $\binom{n-1}{2}$.

\subsection{\texorpdfstring{$g_{(\star,0,\star)}(n)$}{} Bounds}\label{subsec:star0star}
For the upper bound, we quickly obtain $g_{(\star,0,\star)}(n) \leq n$ by noting any family $\F \subset 2^{[n]}$ that satisfies $(\star,0,\star)$-intersection pattern modulo $3$ satisfies the classical modulo $3$ oddtown rules and hence $|\F| \leq n$. For the lower bound, we consider the construction $\binom{[n]}{n-1}$ when $n \equiv 2 \pmod{3}$, $\binom{[n-1]}{n-2}$ when $n \equiv 1 \pmod{3}$ and $\binom{[n-2]}{n-3}$ when $n \equiv 0 \pmod{3}$. It is not hard to see that such families satisfies $(\star,0,\star)$-intersection pattern modulo $3$ and have sizes $n$, $n-1$, and $n-2$ respectively.

\par

\subsection{\texorpdfstring{$g_{(0,\star,0)}(n)$}{} Bounds}\label{subsec:0star0} 
We obtain $g_{(0,\star,0)}(n) \leq n$ from the \nameref{lemma:trace} with the classical modulo $3$ oddtown problem, and noting that $[n]$ cannot be in such a family. 
For the lower bound, consider $k \equiv 0 \pmod{3}$ and let $n = 2k$. 
The family $\A = \{A_1, \ldots, A_k\}$ where 
\[
    A_i = ([k] \setminus i) \cup \{k+i\} \quad (i \in [k]),
\]
satisfies $(0,\star,0)$-intersection pattern modulo $3$ since 
its pairwise intersections have size 
$k-2 \equiv 1 \pmod{3}$ and its $3$-wise intersections have 
size $k-3 \equiv 0 \pmod{3}$. 

\subsection{\texorpdfstring{$g_{(\star,\star, 0)}(n)$}{} Bounds}\label{subsec:starstar0} 
For the lower bound, we consider the construction $\binom{[n]}{n-1}$ when $n \equiv 0 \pmod{3}$, $\binom{[n-1]}{n-2}$ when $n \equiv 1 \pmod{3}$ and $\binom{[n-2]}{n-3}$ when $n \equiv 2 \pmod{3}$. It is not hard to see that such families satisfies $(\star,0,\star)$-intersection pattern modulo $3$ and have sizes $n$, $n-1$, and $n-2$ respectively. In fact, these lower bounds are all best possible as we 
will now show. 
\par

Let $\F \subset 2^{[n]}$ satisfy $(\star,\star,0)$-intersection pattern modulo $3$. Fix any $F \in \F$. The \nameref{lemma:trace}, together with the classical modulo $3$ oddtown problem, show that $|\F| \leq |F|+1$. We therefore recover the desired result by noting that there exists $F \in \F$ so that $|F| \leq n-1$ when $n\equiv 0 \pmod{3}$, $|F| \leq n-2$ when $n \equiv 1\pmod{3}$, $|F| \leq n-3$ when $n\equiv 2 \pmod{3}$.

\subsection{\texorpdfstring{$g_{(0, 0, \star)}(n)$}{} Bounds}\label{subsec:00star}
The upper bound of $g_{(0, 0, \star)}(n) = O(n^2)$ follows by the \nameref{lemma:trace} and the Deza-Frankl-Singhi Theorem \cite{DFS}. The lower bound $g_{(0, 0, \star)}(n) = \Omega(n)$ comes from the following construction: 

Let $n=9k-3$ and $\{e_1, e_2, \ldots, e_{3k-1}\}$ be a graph perfect matching on $[3k,9k-3]$ (i.e. $|e_i|=2$ and $e_i \cap e_j = \emptyset$). Then, for $1 \leq i \leq 3k-1$ take $A_i = ([3k-1] \setminus i) \cup e_i$. Then $|A_i| = (3k-2)+2=3k$ and $|A_i \cap A_j| = (3k-3)$ and $|A_i \cap A_j \cap A_l| = 3k-4$ for $i,j,l$ distinct. Therefore $\A = \{A_1, \ldots, A_{3k-1}\}$ satisfies $(0,0,\star)$-intersections pattern modulo $3$ and has size roughly $n/3$.

\section{Concluding Remarks}
In this paper, we studied families satisfying 
$\alpha$-intersection patterns modulo $2$ for 
$\alpha \in \mathbb{F}_2^3$ and $\alpha \in \mathbb{F}_2^4$ and $\alpha$-intersection patterns modulo $3$ for $\alpha \in \{0,\star\}^3$. As with many problems in this area, 
two interesting directions to consider are  
the $r$-uniform variants (where we further require $\F \subseteq \binom{[n]}{r}$) and the modulo $\ell$ variants for general positive integers $\ell \in \N$.

\par
The $r$-uniform $\alpha=(0,0)$ intersection pattern modulo $\ell$ problem was determined (for large $n$) in \cite{FT2}, extending the more general Deza-Erd{\H o}s-Frankl bound \cite{DEF} to the case when $n \not\equiv 0 \pmod{\ell}$. This result naturally extends to the $r$-uniform $(0,\ldots,0)$-intersection pattern modulo $2$ problem, and it would be interesting to explore other $\alpha$-intersection patterns modulo $2$ in the $r$-uniform setting.

\par
For problems with intersection conditions modulo $\ell$ for an arbitrary $\ell \in \N$, a generalization of our constructions may be fruitful. To this end, consider the following construction: 

\begin{const}
Let $n,m \geq 1$ be integers and let $\mathcal{M} = \{ M_1, \ldots, M_n\}$ consist of $n$ pairwise disjoint sets of size $m$ on ground set $[n+1,n(m+1)]$. Define $A_i = ([n] \setminus \{i\}) \cup M_i$ and consider the family $\A_{n,m} = \{A_1, \ldots, A_n\}$. \end{const}

Observe that \cref{construction:110_family} essentially corresponds to $\A_{n,1}$ and the construction from \cref{subsec:00star} corresponds to $\A_{n,2}$. Further, note that the intersection sizes from $\A_{n,m}$ are as follows: 

\begin{itemize}
    \item $|A_i| = n+m-1$ for all $A_i \in \A_{n,m}$ 
    \item $|A_i \cap A_j| = n-2$ for all distinct $A_i, A_j \in \A_{n,m}$ 
    \item $|A_i \cap A_j \cap A_k | = n-3$ for all distinct $A_i, A_j, A_k \in \A_{n,m}$ 
\end{itemize}

For all $\ell \geq 1$ and all intersection patterns $\alpha \in \{0,\star\}^3$ where $\alpha \neq (0,0,0)$ and $\alpha \neq (\star,\star, \star)$, there exists a choice of $n,m$ for which $\A_{n,m}$ satisfies $\alpha$-intersection pattern modulo $\ell$. This shows that there exists a linear size construction of a family which satisfies $\alpha$-intersection pattern modulo $\ell$ for all $\alpha \in \{0,\star\}^3$. However, as in the case when $\ell = 3$, when $\alpha = (0,0,\star)$ and $\ell$ is prime, the corresponding upper bound from the Deza-Frankl-Singhi Theorem \cite{DFS} is of the order $\Theta (n^{\ell-1})$. It would be interesting to improve this gap from either direction.

\par 
For $k \geq 3$, we proved that there are only eight vectors
$\alpha \in \mathbb{F}_2^k$ with $f_\alpha(n) = \Omega(n)$. Moreover, the remaining $2^k-8$ vectors $\alpha \in \ff{k}$ satisfies $f_\alpha(n) = O(\sqrt{n})$, which leads naturally to the following: 
\begin{question}\label{question} 
Does there exists an absolute constant $C$ such that, for all $k\geq 1$, there are at most $C$ vectors $\alpha \in \mathbb{F}_2^k$ satisfying $f_\alpha(n) = \Omega(\sqrt{n})$? 
\end{question}
\par

\par
Since the initial arxiv posting of this preprint, there has been exciting progress towards \cref{question}. Wei, Zheng, and Ge \cite{WZG} answered \cref{question} in the affirmative showing that one may take $C=2^7$. It remains open to determine the optimal constant $C$.  

\vspace{5mm}
\par
{\bf Acknowledgements:} We are very
grateful to an anonymous referee for detailed comments which helped improve the paper. The second author would like to thank an anonymous referee from \cite{OV} for mentioning a particular case of Construction \ref{construction:001_family} as well as Xizhi Liu and Sayan Mukherjee for some initial conversations on these problems.

\bibliographystyle{plain}
\bibliography{bib}

\begin{appendices}
\section{ }\label[secinapp]{sec:appendix} 
\begin{appendixthm}[Folklore]\label{dual-eventown-canoncial}
In this appendix, we will prove that
\[
    f_{(1,1)}(n) = 
    \begin{cases}
        2^{\frac{n-2}{2}} & \text{$n$ even} \\
        2^{\frac{n-1}{2}} & \text{$n$ odd}.
    \end{cases}
\]
\end{appendixthm}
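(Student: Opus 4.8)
The plan is to establish matching bounds: the lower bound by adjoining a new element to a canonical eventown family, and the upper bound by a linear-algebra argument modeled on Berlekamp's eventown proof, but applied to the \emph{differences} of characteristic vectors.

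For the lower bound, I would take the canonical eventown family $\mathcal{E} \subseteq 2^{[n-1]}$ --- fix a partition of $[n-1]$ into $\lfloor (n-1)/2\rfloor$ pairs (with one leftover point if $n-1$ is odd) and take all unions of pairs --- so that $|\mathcal{E}| = 2^{\lfloor (n-1)/2\rfloor}$, every member of $\mathcal{E}$ has even size, and every pairwise intersection is even. Then $\F := \{E \cup \{n\} : E \in \mathcal{E}\}$ has all members of odd size and all pairwise intersections of odd size, i.e.\ it satisfies $(1,1)$-intersection pattern modulo $2$, and $|\F| = 2^{\lfloor (n-1)/2\rfloor}$, which is $2^{(n-2)/2}$ for $n$ even and $2^{(n-1)/2}$ for $n$ odd.

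For the upper bound, let $\F = \{F_1,\dots,F_m\}$ satisfy $(1,1)$-intersection pattern modulo $2$, let $v_i \in \mathbb{F}_2^n$ be the characteristic vector of $F_i$, and write $\langle\cdot,\cdot\rangle$ for the standard bilinear form on $\mathbb{F}_2^n$, so $\langle v_i,v_j\rangle = 1$ for all $i,j$, including $i=j$. I would set $w_i := v_i + v_1$ for $2 \le i \le m$ and $U := \Span\{w_2,\dots,w_m\}$. Expanding bilinearly, $\langle w_i,w_j\rangle = \langle v_i,v_j\rangle + \langle v_i,v_1\rangle + \langle v_1,v_j\rangle + \langle v_1,v_1\rangle = 0$, so $U$ is totally isotropic (in particular every vector of $U$ has even weight), and similarly $\langle v_1,w_i\rangle = 0$, so $v_1 \in U^\perp$. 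Since each $v_i$ lies in the coset $v_1 + U$ and the $v_i$ are distinct, $m \le |v_1+U| = 2^{\dim U}$. Total isotropy gives $\dim U \le \lfloor n/2\rfloor$, which already settles the odd case. When $n$ is even, $\dim U = n/2$ would force $U = U^\perp$ and hence $v_1 \in U$, contradicting that $v_1$ has odd weight; so $\dim U \le n/2 - 1$ and $m \le 2^{(n-2)/2}$.

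I expect the only non-routine step to be the even case of the upper bound: recognizing that the all-ones Gram matrix forces the space of differences $v_i - v_1$ to be totally isotropic and orthogonal to $v_1$, and that its maximality as an isotropic subspace is obstructed exactly by the odd weight of $v_1$. The rest --- the construction, the bilinear expansions, and the coset count $m \le 2^{\dim U}$ --- is bookkeeping that I would keep brief.
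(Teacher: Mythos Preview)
Your proof is correct and essentially matches the paper's: the lower bound is identical (adjoin a common point to a canonical eventown family), and the upper bound rests on the same key observation that the differences $v_i+v_1$ are mutually orthogonal and orthogonal to every $v_j$. The only cosmetic difference is that the paper bounds $\dim\Span(V)$ and uses the quadratic map $w\mapsto\langle w,w\rangle$ to get $|\F|\le\tfrac12|\Span(V)|$ (so the even-$n$ refinement drops out of the integer constraint $\dim\Span(V)\le(n+1)/2$), whereas you count a coset of $U=\Span\{v_i+v_1\}$ and handle even $n$ by noting $v_1\in U^\perp\setminus U$.
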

\begin{proof}
The extremal family consists of adding an auxiliary element to a canonical eventown construction. Let $2k+1 \in \Z$ be an odd integer and set $e_i=\{2i-1,2i\}$ for $i \in [k]$. Define 
\[ \F_k = \Bigg\{  \{2k+1\} \cup \; \bigcup\limits_{i \in I} e_i \; : I \in 2^{[k]} \Bigg\}.  \]
Observe that for $n=2k+1$ odd, $\F_k \subset 2^{[n]}$ satisfies $(1,1)$-intersection pattern modulo $2$ and for $n=2k$ even, $\F_{k-1} \subset 2^{[n-1]} \subset 2^{[n]}$ satisfies $(1,1)$-intersection pattern modulo $2$. Moreover, both constructions have the desired number of sets, establishing the lower bound.

For the upper bound, we proceed in a manner similar to the classical eventown proof. Let $\F \subset 2^{[n]}$ satisfies $(1,1)$-intersection pattern modulo $2$ and set $V \subset \ff{n}$ to be the corresponding characteristic vectors. Define $W = \Span(V)$ and consider the following map  $\phi: W \to \mathbb{F}_2$ where $w \mapsto w\cdot w$.  This is a surjective homomorphism, so 
$W/\ker\phi \cong \mathbb{F}_2$ giving us that  
$ |\ker\phi| = \frac12 |W| $. Moreover, as $\F$ satisfies $(1,1)$-intersection pattern modulo $2$, $\ker\phi \cap V = \emptyset$, which gives $|\F| = |V| \leq \frac12 |W|$. The proof is finished by the below claim: 

\begin{appendixclaim}\label{dimW upper bound}
    $\dim(W) \leq (n+1)/2$. Hence, for $n$ even, $\dim(W) \leq n/2$.
\end{appendixclaim}
\begin{proof}[Proof of \autoref{dimW upper bound}]
Let $B := \{v_1,\dots, v_m\}$ be a maximal linearly independent subset of $V$. We now consider the  
set $B_0 := \{v_1 + v_2, v_1 + v_3,\dots, v_1 + v_m\}$, and will show this is a linearly independent subset of $W^\perp$. 

Indeed, for any $v \in V$, $v \cdot (v_1 + v_i) = 1 + 1 = 0 \pmod{2}$ for all $i \in \{2,\dots, m\}$ since $V$ consists of the characteristic vectors of $\F$.  Since $W = \Span(V)$, it follows that $w \cdot (v_1 + v_i) = 0 \pmod{2}$ for all $w \in W$ and for all $i \in \{2, \ldots, m\}$. Therefore, $B_0 \subseteq W^\perp$.

To see linear independence, observe that we may rewrite any non-trivial combination of scalars $\lambda_2,\dots, \lambda_m \in \mathbb{F}_2$ as follows: 
\begin{align*}
0 = \sum_{i=2}^m \lambda_i (v_1 + v_i) =  
        \left(\sum_{i=2}^m \lambda_i\right) v_1 + 
        \sum_{i=2}^m \lambda_i v_i. 
\end{align*}
    
Thus, since $\{v_1,\dots, v_m\}$ is linearly independent subset of $W$, it follows that $\lambda_i = 0$ for all $i$. This means that $B_0 \subseteq W^\perp$ is linearly independent and therefore $\dim(W^\perp) \geq m-1$. Hence, 
\[
        n = \dim(W) + \dim(W^\perp) \geq 2m - 1 
        \implies m \leq \frac{n + 1}{2}. \qedhere
\]
\end{proof}
This completes the proof. 
\end{proof}
\end{appendices}

\end{document}